\numberwithin{equation}{section}
\newtheorem{theorem}{Theorem}[section]
\newtheorem{lemma}[theorem]{Lemma}
\newtheorem{proposition}[theorem]{Proposition}
\newtheorem{corollary}[theorem]{Corollary}
\theoremstyle{definition}
\newtheorem{definition}[theorem]{Definition} 
\newtheorem{remark}[theorem]{Remark}
\newtheorem{example}[theorem]{Example}
\begin{document}


\title[Complete intersection vanishing ideals on degenerate
tori]{Complete intersection vanishing ideals on degenerate tori  
over  finite fields} 

\thanks{The second and third author were partially supported by SNI}

\author{Hiram H. L\'opez}
\address{
Departamento de
Matem\'aticas\\
Centro de Investigaci\'on y de Estudios
Avanzados del
IPN\\
Apartado Postal
14--740 \\
07000 Mexico City, D.F.
}
\email{hlopez@math.cinvestav.mx}

\author{Rafael H. Villarreal}
\address{
Departamento de
Matem\'aticas\\
Centro de Investigaci\'on y de Estudios
Avanzados del
IPN\\
Apartado Postal
14--740 \\
07000 Mexico City, D.F.
}
\email{vila@math.cinvestav.mx}

\author{Leticia Z\'arate}
\address{
CEFyMAP, Universidad Aut\'onoma de Chiapas\\
4a. Oriente Norte No. 1428. Entre 13a. y 14a. Norte\\
Col. Barrio La Pimienta\\ 
29000 Tuxtla Guti\'errez, Chiapas, M\'exico \\ 
}\email{leticia@math.cinvestav.mx}

\keywords{Complete intersection, vanishing ideal, toric ideal,
monomial curve, evaluation code, degree, 
regularity, lattice ideal, Frobenius number}
\subjclass[2010]{Primary 13F20; Secondary 13P25, 14H45, 11T71, 94B25.} 

\begin{abstract} We study the complete
intersection property and the algebraic invariants (index of regularity,
degree) of vanishing ideals on degenerate tori over finite fields. We establish a
correspondence 
between vanishing ideals and toric ideals associated 
to numerical semigroups. This correspondence is shown to preserve the complete
intersection property, and allows us to use some available algorithms
to determine whether  
a given vanishing ideal is a complete intersection. We give formulae for the
degree, and for the index of 
regularity of a complete intersection in terms of the 
Frobenius number and the generators of a numerical semigroup.
\end{abstract}

\maketitle 

\section{Introduction}\label{intro-ci-mcurves}

Let $K=\mathbb{F}_q$  be a finite field with $q$ elements and let
$v_1,\ldots,v_n$ be a sequence of 
positive integers. Consider the {\it degenerate projective torus\/} 
$$
X:=\{[(x_1^{v_{1}},\ldots,x_n^{v_n})]\, \vert\, x_i\in K^*\mbox{ for all
}i\}\subset\mathbb{P}^{n-1},
$$
parameterized by the monomials $x_1^{v_1},\ldots,x_n^{v_n}$, where 
$K^*=\mathbb{F}_q\setminus\{0\}$ and 
$\mathbb{P}^{n-1}$ is a projective space over the field $K$. This set
is a multiplicative group under componentwise multiplication.  
If $v_i=1$ for all $i$, $X$ is just a {\it projective torus\/}. 

Let $S=K[t_1,\ldots,t_n]=\oplus_{d=0}^\infty S_d$ be a polynomial
ring over the field 
$K$ with the standard grading. Recall that the {\it vanishing
ideal\/} of $X$, denoted by 
$I(X)$, is the ideal 
of $S$ generated by the homogeneous polynomials
that vanish on $X$. To study $I(X)$, we will associate with this a
semigroup $\mathcal{S}$ and a toric
ideal $P$ that depend on $v_1,\ldots,v_n$ and the multiplicative
group of $\mathbb{F}_q$. 

In what follows $\beta$ denotes a generator of the cyclic group
$(K^*,\, \cdot\, )$, $d_i$ denotes $o(\beta^{v_i})$, the
order of $\beta^{v_i}$ for $i=1,\ldots,n$, and $\mathcal{S}$ denotes
the semigroup $\mathbb{N}d_1+\cdots+\mathbb{N}d_n$. If
$d_1,\ldots,d_n$ are relatively prime, $\mathcal{S}$ is called a {\it
numerical semigroup\/}. As is seen in
Section~\ref{ci-section}, the algebra of $I(X)$ is closely related to
the algebra of the {\it toric ideal\/} of the  
semigroup ring 
$$K[\mathcal{S}]=K[y_1^{d_1},\ldots,y_1^{d_n}]\subset K[y_1],$$
where $K[y_1]$ is a polynomial ring. Recall that the 
{\it toric ideal\/} of $K[\mathcal{S}]$, denoted by $P$, is the
kernel of the following epimorphism of $K$-algebras
$$\varphi\colon S=K[t_1,\ldots,t_n]\longrightarrow
K[\mathcal{S}],\ \ \ \ \ 
f\stackrel{\varphi}{\longmapsto} f(y_1^{d_1},\ldots,y_1^{d_n}).$$ 

Thus, $S/P\simeq K[\mathcal{S}]$. Since $K[y_1]$ is integral over
$K[\mathcal{S}]$ 
we have ${\rm ht}(P)=n-1$. The ideal $P$ is graded if one gives
degree $d_i$ to variable $t_i$. For $n=3$, the first non-trivial 
case, this type of toric ideals were studied by Herzog
\cite{He3}. For $n\geq 4$, these toric ideals have been studied by many
authors \cite{stcib-algorithm,stcib,delorme,Eli1,ElVi,thoma}.

In this paper, we relate some of the algebraic invariants and
properties of $I(X)$ with those of $P$ and $\mathcal{S}$. We are
especially interested in the  
degree and the regularity index, and in the 
complete intersection property.

The most well known properties that $P$ and $I(X)$ have in common is
that both are Cohen-Macaulay 
graded lattice ideals of dimension $1$ \cite{He3,algcodes}.

The contents of this paper are as follows. In Section~\ref{prelim}, we
introduce some of the notions that will be needed
throughout the paper. 

A key fact that allows us to link the properties of $P$ and $I(X)$
is that the homogeneous lattices of these ideals are closely  
related (Proposition~\ref{jan2-12}). 
If $g_1,\ldots,g_m$ is a set of generators for $P$ consisting of
binomials, then $h_1,\ldots,h_m$ is a set of generators for $I(X)$, where
$h_k$ is the binomial obtained from $g_k$ after substituting $t_i$ by
$t_i^{d_i}$ for $i=1,\ldots,n$ (Proposition~\ref{dec22-11}). As a
consequence if $n=3$, then $I(X)$ is minimally generated by $2$ or $3$
binomials (Corollary~\ref{jun25-12}). If $I(X)$ is a complete
intersection, one of our main results show that a minimal generating set for $I(X)$
consisting of binomials corresponds to a minimal generating set for $P$
consisting of binomials and viceversa (Theorem~\ref{ci-i(x)-p-1}). As
a consequence $I(X)$ is a complete 
intersection if and only if $P$ is a complete intersection
(Corollary~\ref{ci-i(x)-p}).  

We show a formula for the
degree of $S/I(X)$ (Lemma~\ref{dec12-11}).
The {\it Frobenius number\/} of a numerical semigroup is the largest 
integer not in the semigroup.  For complete intersections, we give a formula
that relates the index of regularity
of $S/I(X)$ with the Frobenius number of 
the numerical semigroup generated by
$o(\beta^{rv_1}),\ldots,o(\beta^{rv_n})$, where $r$ is the greatest
common divisor of  
$d_1,\ldots,d_n$ (Corollary~\ref{ci-formula-gcd=1}). 

The Frobenius
number occurs in many branches of mathematics and is one of the most
studied invariants in the theory of 
semigroups. A great deal of effort has been directed at the 
effective computation of this number, see the monograph of Ram\'\i
rez-Alfons\'\i n \cite{frobenius-problem}.  

The complete intersection property of $P$ has been nicely
characterized, using the notion of a binary tree
\cite{stcib-algorithm,stcib} and the notion of 
{\it suites distingu\'ees\/} \cite{delorme}. For $n=3$, there is a
classical result of  
\cite{He3} showing an algorithm to construct a generating set for
$P$. Thus, using our results,
one can obtain various classifications of the complete intersection
property of $I(X)$. Furthermore, in
\cite{stcib-algorithm} an effective algorithm is given to determine
whether $P$ is a complete intersection. This algorithm has been
implemented in the 
distributed library
cimonom.lib \cite{stcib-algorithm-1} of {\it Singular\/} \cite{singular}.
Thus, using our results, one can use this algorithm to
determine whether $I(X)$ is a complete intersection (see
Example~\ref{jun20-12}). If $I(X)$ is a complete intersection, 
this algorithm returns the generators of $P$ and the Frobenius number. As a byproduct,
we can construct interesting examples of complete intersection 
vanishing ideals (see Example~\ref{jan5-12-1}). 

We show how to compute the
vanishing ideal $I(X)$ using the 
notion of saturation of an ideal with respect to a polynomial
(Proposition~\ref{computing-i(x)-saturation}). 

It is worth mentioning that our results could be applied to coding
theory. The algebraic invariants 
of $S/I(X)$ occur in algebraic coding 
theory as we now briefly explain. An {\it evaluation code} over $X$
is a linear code 
obtained by evaluating the linear space of homogeneous $d$-forms of
$S$ on the set of points $X\subset{\mathbb P}^{n-1}$. A linear code
obtained in this way, denoted by 
$C_X(d)$, has {\it length\/} $|X|$ and {\it dimension} 
$\dim_K (S/I(X))_d$. The computation of the index of regularity of
$S/I(X)$ is important 
for applications to 
coding theory: for $d\geq {\rm reg}\, S/I(X)$ the code $C_X(d)$ coincides
with the underlying vector space $K^{|X|}$ and has, 
accordingly, minimum distance equal to $1$. Thus, potentially 
good codes $C_X(d)$ can occur only if $1\leq d < {\rm reg}(S/I(X))$. 
The length, dimension and minimum distance of evaluation codes
$C_X(d)$ arising from complete 
intersections have 
been studied in 
\cite{duursma-renteria-tapia,gold-little-schenck,
hansen,cartesian-codes,affine-codes,ci-codes,d-compl}.

For all unexplained
terminology and additional information,  we refer to \cite{EisStu}
(for the theory of 
lattice ideals), \cite{Sta1,monalg} (for commutative algebra and the
theory of Hilbert functions).

\section{Preliminaries}\label{prelim}

We continue to use the notation and definitions used in
Section~\ref{intro-ci-mcurves}. In this section, we 
introduce the notions of degree and regularity via Hilbert 
functions, and the notion of a lattice ideal. 

The {\it Hilbert function\/} of $S/I(X)$ is given by $H_X(d):=
\dim_K(S_d/I({X})\cap S_d)$, and the {\it Krull-dimension\/} of
$S/I(X)$ is denoted by 
$\dim(S/I(X))$. The unique polynomial 
$$
\textstyle h_X(t)=\sum_{i=0}^{k-1}c_it^i\in 
\mathbb{Q}[t]$$
 of degree $k-1=\dim(S/I(X))-1$ such that
$h_X(d)=H_X(d)$ for 
$d\gg 0$ is called the {\it Hilbert polynomial\/} of $S/I(X)$. The
integer $c_{k-1}(k-1)!$, denoted by ${\rm deg}(S/I(X))$, is 
called the {\it degree\/} of $S/I(X)$. 
According to \cite[Lecture 13]{harris}, $h_X(d)=|X|$ for
$d\geq |X|-1$. Hence
$$
|X|=h_X(d)=c_0=\deg(S/I(X))
$$
for $d\geq |X|-1$. Thus, $|X|$ is the degree of $S/I(X)$.

\begin{definition}\label{definition: regularity}
The \emph{index of regularity} of $S/I(X)$, denoted by
${\rm reg}(S/I(X))$, is the least integer $\ell\geq 0$ such that
$h_X(d)=H_X(d)$ for $d\geq \ell$. 
\end{definition}

The index of regularity of $S/I(X)$ is equal to the Castelnuovo
Mumford regularity of $S/I(X)$ because this ring is Cohen-Macaulay of
dimension $1$. 

\begin{remark}\label{referee2-1} The Hilbert series of
$S/I(X)$ can be 
written as
$$
F_X(t):=\sum_{i=0}^{\infty}H_X(i)t^i=
\frac{h_0+h_1t+\cdots+h_rt^r}{1-t},
$$
where $h_0,\ldots,h_r$ are positive integers. The number $r$ is the
regularity index of $S/I(X)$ and $h_0+\cdots+h_r$ is the degree of
$S/I(X)$ (see \cite[Corollary~4.1.12]{monalg}). The same observation
holds for any graded Cohen-Macaulay ideal $I\subset S$ of height $n-1$.
\end{remark}

Recall that a binomial in $S$ is a polynomial of the
form $t^a-t^b$, where 
$a,b\in \mathbb{N}^n$ and where, if
\mbox{$a=(a_1,\dots,a_n)\in\mathbb{N}^n$}, we set 
\[
t^a=t_1^{a_1}\cdots t_n^{a_n}\in S. 
\]
A {\it binomial ideal\/} is an ideal generated by binomials. 

Given $c=(c_i)\in {\mathbb Z}^n$, the set ${\rm supp}(c)=\{i\, |\,
c_i\neq 0\}$ is the {\it support\/} of
$c$. The vector $c$ can be written as $c=c^+-c^-$, 
where $c^+$ and $c^-$ are two nonnegative vectors 
with disjoint support. If $t^a$ is a monomial,
with $a=(a_i)\in\mathbb{N}^n$, the set 
${\rm supp}(t^a)=\{t_i\vert\, a_i>0\}$ is called the {\it support\/}
of $t^a$.

\begin{definition}\label{lattice-ideal-def}\rm 
A subgroup $\mathcal{L}$ of $\mathbb{Z}^n$ is
called a {\it lattice\/}.   A {\it lattice ideal\/} is an 
ideal of the form
$$
I(\mathcal{L})=(\{t^{\alpha^+}-t^{\alpha^-}\vert\, 
\alpha\in\mathcal{L}\})\subset S
$$
for some lattice $\mathcal{L}$ in $\mathbb{Z}^n$. A  lattice
$\mathcal{L}$ is 
called {\it homogeneous\/} if there is 
an integral vector $\omega$ with positive entries such that
$\langle\omega,a\rangle=0$ for $a\in\mathcal{L}$.
\end{definition}

\begin{definition} An ideal $I\subset S$ is called a {\it complete
intersection\/} if  there exists  $g_1,\ldots,g_m$ such that $I=(g_1,\ldots,g_m)$, 
where $m$ is the height of $I$. 
\end{definition}

\begin{remark}\label{future-use} A graded binomial ideal $I\subset S$
is a complete 
intersection if and only if $I$ is generated by a set of homogeneous
binomials $g_1,\ldots,g_m$, with $m={\rm ht}(I)$, and 
any such set of homogeneous generators is already a regular sequence 
(see \cite[Proposition~1.3.17, Lemma~1.3.18]{monalg}). 
\end{remark}

\begin{lemma}\label{hilbertseries-ci-coro} 
Let $S=K[t_1,\ldots,t_n]$ be a 
polynomial ring with the standard grading. If $I$ is a graded
ideal of $S$ generated by a homogeneous regular sequence
$f_1,\ldots,f_{n-1}$, then 
\[
{\rm reg}(S/I)=\sum_{i=1}^{n-1}(\deg(f_i)-1)\ \mbox{ and }\ {\rm
deg}(S/I)=\deg(f_1)\cdots\deg(f_{n-1}). 
\] 
\end{lemma}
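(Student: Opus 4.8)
The plan is to compute the Hilbert series of $S/I$ in closed form and then extract both invariants directly from Remark~\ref{referee2-1}. The key tool is the behaviour of Hilbert series under division by a non-zerodivisor: if $M$ is a finitely generated graded $S$-module and $f$ is a homogeneous non-zerodivisor on $M$ of degree $d$, then the short exact sequence
$$
0\longrightarrow M(-d)\stackrel{f}{\longrightarrow} M\longrightarrow M/fM\longrightarrow 0
$$
gives the identity $F(M/fM,t)=(1-t^d)\,F(M,t)$ for the Hilbert series.

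First I would apply this identity successively along the regular sequence $f_1,\ldots,f_{n-1}$, starting from $S$ itself, whose Hilbert series is $F(S,t)=(1-t)^{-n}$. Writing $d_i:=\deg(f_i)$, this yields
$$
F(S/I,t)=\frac{\prod_{i=1}^{n-1}(1-t^{d_i})}{(1-t)^n}.
$$
Next I would factor each numerator term as $1-t^{d_i}=(1-t)(1+t+\cdots+t^{d_i-1})$ and cancel $(1-t)^{n-1}$ against the denominator, obtaining
$$
F(S/I,t)=\frac{h(t)}{1-t},\qquad h(t):=\prod_{i=1}^{n-1}\bigl(1+t+\cdots+t^{d_i-1}\bigr).
$$
The polynomial $h(t)$ has positive integer coefficients, being a product of such polynomials, so this expression is exactly of the shape recorded in Remark~\ref{referee2-1}. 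Consequently the regularity index of $S/I$ equals $\deg h(t)=\sum_{i=1}^{n-1}(d_i-1)$, and the degree of $S/I$ equals $h(1)=\prod_{i=1}^{n-1}d_i$, which are the two claimed formulae.

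The argument is essentially routine and I do not expect a genuine obstacle; the only point that requires a word of justification is that Remark~\ref{referee2-1} applies, i.e. that $S/I$ is a graded Cohen--Macaulay ring defined by an ideal of height $n-1$. This is immediate from the hypothesis: an ideal generated by a regular sequence of length $n-1$ has height $n-1$, so $\dim(S/I)=1$, and a quotient by a regular sequence is Cohen--Macaulay. With this in place, reading off the degree of the numerator and its value at $t=1$ completes the proof.
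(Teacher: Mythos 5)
Your proposal is correct and follows essentially the same route as the paper: both compute the Hilbert series $\prod_{i=1}^{n-1}(1-t^{\deg f_i})/(1-t)^n$, cancel to write it as $h(t)/(1-t)$ with $h(t)=\prod_i(1+t+\cdots+t^{\deg f_i-1})$, and read off the regularity index and degree from Remark~\ref{referee2-1}. The only difference is cosmetic: you derive the Hilbert series of a quotient by a regular sequence via the standard exact-sequence argument (and explicitly check the Cohen--Macaulay height-$(n-1)$ hypothesis), whereas the paper simply cites this formula from the literature.
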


\begin{proof} We set $\delta_i=\deg(f_i)$. By \cite[p.~104]{monalg},
the Hilbert series of $S/I$ 
is given by 
\begin{equation}
F_I(t)=
\frac{\prod_{i=1}^{n-1}\left(1-t^{\delta_i}\right)}{(1-t)^n}=
\frac{\prod_{i=1}^{n-1}(1+t+\cdots+t^{\delta_i-1})}{(1-t)}. 
\end{equation}
Thus, by Remark~\ref{referee2-1}, ${\rm
reg}(S/I)=\sum_{i=1}^{n-1}(\delta_i-1)$ and ${\rm
deg}(S/I)=\delta_1\cdots\delta_{n-1}$. 
\end{proof}

\section{Complete intersections and algebraic invariants}\label{ci-section}

We continue to use the notation and definitions used in
Sections~\ref{intro-ci-mcurves} and \ref{prelim}. In this section, we
study vanishing 
ideals over degenerate projective tori. We study the complete
intersection property and the 
algebraic invariants of vanishing ideals. We will establish a
correspondence between vanishing ideals and toric ideals associated 
to semigroups of $\mathbb{N}$.

Let $D$ be the non-singular matrix $D={\rm diag}(d_1,\ldots,d_n)$.
Consider the homomorphisms of $\mathbb{Z}$-modules:
\begin{eqnarray*} 
\psi\colon\mathbb{Z}^n\rightarrow\mathbb{Z},\ \ \ \ \ \ \ & e_i\mapsto
d_i,\ \ \ \ \ \ \  & \\
D\colon\mathbb{Z}^n\rightarrow\mathbb{Z}^n,& e_i\mapsto d_ie_i.&
\end{eqnarray*}

If $c=(c_i)\in\mathbb{R}^n$, we set $|c|=\sum_{i=1}^nc_i$. Notice 
that $|D(c)|=\psi(c)$ for any $c\in\mathbb{Z}^n$. There are two
homogeneous lattices that will play a role here: 
$$
\mathcal{L}_1=\ker(\psi)\ \mbox{ and } \mathcal{L}=D(\ker(\psi)).
$$
The map $D$ induces a $\mathbb{Z}$-isomorphism between
$\mathcal{L}_1$ and $\mathcal{L}$. It is well known \cite{monalg} that 
the toric ideal $P$ is the lattice ideal of $\mathcal{L}_1$. Below, we show
that $I(X)$ is the lattice ideal of $\mathcal{L}$.

\begin{lemma}\label{dec19-11} The map $t^a-t^b\mapsto
t^{D(a)}-t^{D(b)}$ induces a 
bijection between the binomials $t^a-t^b$ of $P$ whose terms $t^a$,
$t^b$ have disjoint support and the binomials $t^{a'}-t^{b'}$ of
$I(X)$ whose terms 
$t^{a'}$, $t^{b'}$ have disjoint support. 
\end{lemma}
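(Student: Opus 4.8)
The plan is to pass from binomials to their exponent vectors and reduce everything to the $\mathbb{Z}$-isomorphism $D\colon\mathcal{L}_1\to\mathcal{L}$. The starting observation is that a nonzero binomial $t^a-t^b$ of $S$ has disjoint support if and only if $a=c^{+}$ and $b=c^{-}$ for the vector $c=a-b\in\mathbb{Z}^n$; thus the disjoint-support binomials are exactly $\{t^{c^{+}}-t^{c^{-}} : 0\neq c\in\mathbb{Z}^n\}$, parameterized bijectively by $c$. Because every $d_i$ is a positive integer, $D$ preserves signs and supports coordinatewise, so $D(c^{+})=(Dc)^{+}$, $D(c^{-})=(Dc)^{-}$, and $\mathrm{supp}(Dc)=\mathrm{supp}(c)$. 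Consequently the map $t^a-t^b\mapsto t^{D(a)}-t^{D(b)}$ is precisely the map induced on binomials by $c\mapsto Dc$, and it carries disjoint-support binomials to disjoint-support binomials.

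I would then characterize the disjoint-support binomials in each ideal. For $P=\ker\varphi$, I would apply $\varphi$: since $\varphi(t^a)=y_1^{\psi(a)}$, the binomial $t^{c^{+}}-t^{c^{-}}$ lies in $P$ exactly when $\psi(c^{+})=\psi(c^{-})$, i.e.\ $\psi(c)=0$, i.e.\ $c\in\ker\psi=\mathcal{L}_1$. For $I(X)$ I would compute vanishing directly: writing the points of $X$ as $[(\gamma_1^{s_1},\ldots,\gamma_n^{s_n})]$ with $\gamma_i=\beta^{v_i}$ of order $d_i$ and the $s_i$ free, a homogeneous disjoint-support binomial $t^{c^{+}}-t^{c^{-}}$ vanishes on $X$ if and only if $\prod_i\gamma_i^{s_ic_i}=1$ for all $(s_1,\ldots,s_n)$; specializing one $s_i$ at a time forces $\gamma_i^{c_i}=1$, that is $d_i\mid c_i$ for all $i$, and conversely this suffices. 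Since $d_i\mid c_i$ for all $i$ means $c=D(c')$ for some $c'\in\mathbb{Z}^n$, and homogeneity $|c|=0$ then reads $\psi(c')=0$ by the identity $|D(c')|=\psi(c')$, we get $c'\in\ker\psi$ and hence $c\in D(\ker\psi)=\mathcal{L}$; this gives $t^{c^{+}}-t^{c^{-}}\in I(X)$ iff $c\in\mathcal{L}$.

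Granting these two characterizations, the lemma is immediate: the disjoint-support binomials of $P$ are indexed by $\mathcal{L}_1$, those of $I(X)$ by $\mathcal{L}$, and the map in question is induced by the bijection $D\colon\mathcal{L}_1\to\mathcal{L}$. Well-definedness, injectivity, and surjectivity all transport from the corresponding properties of $D$ on lattices; in particular every $c'\in\mathcal{L}$ equals $D(c)$ for a unique $c\in\mathcal{L}_1$, which gives the preimage binomial.

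I expect the genuine work to be concentrated in the vanishing computation for $I(X)$: getting the parameterization of $X$ right (in particular the reduction of $x_i\in K^{*}$ to the order-$d_i$ element $\gamma_i=\beta^{v_i}$, and the independence of the exponents $s_i$), and handling the projective/homogeneous bookkeeping so that vanishing collapses cleanly to the divisibility conditions $d_i\mid c_i$. One small point to justify along the way is that a binomial in $I(X)$ is automatically homogeneous, which follows because the coordinates of points of $X$ are units, so $I(X)$ contains no monomial and hence no inhomogeneous binomial. After that, every remaining step is routine manipulation with $c^{\pm}$, supports, and the isomorphism $D$.
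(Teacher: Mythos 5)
Your proof is correct and follows essentially the same route as the paper's: the map on exponent vectors is $D$ (which preserves supports and signs), membership in $P$ reduces to $\psi(c)=0$, and membership in $I(X)$ reduces, via the orders $d_i$ of the $\beta^{v_i}$ together with homogeneity and the identity $|D(c')|=\psi(c')$, to $c\in D(\ker\psi)=\mathcal{L}$. The only differences are organizational and in level of detail: you package the argument as two lattice-membership characterizations transported along the bijection $D\colon\mathcal{L}_1\to\mathcal{L}$ rather than checking well-definedness, injectivity and surjectivity directly, and you spell out two points the paper leaves implicit (the coordinate-wise specialization forcing $d_i\mid c_i$, and the fact that a binomial lying in $I(X)$ is automatically homogeneous because $I(X)$ contains no monomials).
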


\begin{proof} If $f=t^a-t^b$ is a binomial of $P$ whose terms have
disjoint support, then $a-b\in\mathcal{L}_1$ and the terms of
$g=t^{D(a)}-t^{D(b)}$ have disjoint support because 
$${\rm supp}(t^a)={\rm supp}(t^{D(a)})\  \mbox{ and }\ {\rm supp}(t^b)={\rm
supp}(t^{D(b)}).
$$ 
Thus, $|D(a)|=\psi(a)=\psi(b)=|D(b)|$. This means that $g=t^{D(a)}-t^{D(b)}$
is homogeneous in the standard grading of $S$. As
$(\beta^{v_i})^{d_i}=1$ for all $i$, 
it is seen that
$g$ vanishes at all points of  $X$. Hence, $g\in I(X)$ and the 
map is well defined. 

The map is clearly
injective. To show that the map is onto, take a binomial
$f'=t^{a'}-t^{b'}$ in $I(X)$ with $a'=(a_i')$, $b'=(b_i')$ and such
that $t^{a'}$ and $t^{b'}$ have disjoint support. Then,
$(\beta^{v_i})^{a_i'-b_i'}=1$ for all $i$ because $f'$ vanishes at all
points of $X$. Hence, since the order of $\beta^{v_i}$ 
is $d_i$, there are integers 
$c_1,\ldots,c_n$ such that $a_i'-b_i'=c_id_i$ for all $i$. Since $f'$ is
homogeneous, one has $|a'|=|b'|$. It follows readily that
$c\in\mathcal{L}_1$ and $a'-b'=D(c)$. We can write $c=c^+-c^-$. As 
$a'$ and $b'$ have disjoint support, we get $a'=D(c^+)$ and
$b'=D(c^-)$. Thus, the binomial $f=t^{c^+}-t^{c^{-}}$ is in $P$ and
maps to $t^{a'}-t^{b'}$.
\end{proof}

\begin{proposition}\label{jan2-12}
$P=I(\mathcal{L}_1)$ and $I(X)=I(\mathcal{L})$.
\end{proposition}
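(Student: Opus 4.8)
The statement splits into two equalities. For $P=I(\mathcal{L}_1)$ I would simply invoke the standard description of a toric ideal as a lattice ideal \cite{monalg,EisStu}: a binomial $t^a-t^b$ with disjoint-support terms lies in $P=\ker\varphi$ exactly when $\varphi(t^a)=\varphi(t^b)$, that is $y_1^{\psi(a)}=y_1^{\psi(b)}$, which holds iff $a-b\in\ker(\psi)=\mathcal{L}_1$; since $P$ is generated by such binomials, $P=I(\mathcal{L}_1)$. So the real work is the second equality $I(X)=I(\mathcal{L})$, which I would prove by the two inclusions, exploiting Lemma~\ref{dec19-11}.

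First I would record the elementary fact that, because $D={\rm diag}(d_1,\dots,d_n)$ has positive diagonal entries, for every $\alpha\in\mathbb{Z}^n$ one has $(D\alpha)^+=D(\alpha^+)$ and $(D\alpha)^-=D(\alpha^-)$; thus $D$ preserves disjoint supports and the positive/negative decomposition $\alpha=\alpha^+-\alpha^-$. For the inclusion $I(\mathcal{L})\subseteq I(X)$, a generator of $I(\mathcal{L})$ has the form $t^{\alpha^+}-t^{\alpha^-}$ with $\alpha\in\mathcal{L}=D(\mathcal{L}_1)$; writing $\alpha=D(c)$ with $c\in\mathcal{L}_1$, this binomial equals $t^{D(c^+)}-t^{D(c^-)}$, which is precisely the image of $t^{c^+}-t^{c^-}\in P$ under the map of Lemma~\ref{dec19-11}. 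That lemma already shows every such image lies in $I(X)$, so $I(\mathcal{L})\subseteq I(X)$.

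For the reverse inclusion I would use that $I(X)$ is a (Cohen--Macaulay, graded, dimension one) lattice ideal, hence is generated by binomials $t^{a'}-t^{b'}$ whose terms have disjoint support \cite{monalg,EisStu}. For any such generator, the surjectivity part of Lemma~\ref{dec19-11} yields $c\in\mathcal{L}_1$ with $a'=D(c^+)$ and $b'=D(c^-)$, so that $a'-b'=D(c)\in D(\mathcal{L}_1)=\mathcal{L}$ and $t^{a'}-t^{b'}=t^{(Dc)^+}-t^{(Dc)^-}$ is a generator of $I(\mathcal{L})$; hence $I(X)\subseteq I(\mathcal{L})$ and the two ideals coincide. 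Equivalently, the whole argument can be phrased as the recovery of a lattice from its lattice ideal: the disjoint-support exponent differences occurring in $I(X)$ form its defining lattice, and Lemma~\ref{dec19-11} identifies this lattice with $D(\mathcal{L}_1)=\mathcal{L}$. The only non-formal input — and the step I expect to be the main obstacle — is the reduction to disjoint-support binomial generators for $I(X)$, i.e. the structural fact that $I(X)$ is a lattice ideal; once that and Lemma~\ref{dec19-11} are in hand, matching the lattices through $D(\mathcal{L}_1)=\mathcal{L}$ makes both inclusions immediate.
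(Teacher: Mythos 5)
Your proof is correct and follows essentially the same route as the paper: the first equality is the standard toric-ideal-as-lattice-ideal fact, and both inclusions of $I(X)=I(\mathcal{L})$ are obtained exactly as in the paper, by reducing to disjoint-support binomial generators (using that $I(X)$ is a lattice ideal) and transporting them back and forth through the map of Lemma~\ref{dec19-11} together with $\mathcal{L}=D(\mathcal{L}_1)$. Your explicit remark that $(D\alpha)^{+}=D(\alpha^{+})$ and $(D\alpha)^{-}=D(\alpha^{-})$ is a small clarification the paper leaves implicit, but it does not change the argument.
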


\begin{proof} As mentioned above, the first equality is well 
known \cite{monalg}. Since $I(X)$ is a lattice ideal \cite{algcodes}, it is generated
by binomials of the form $t^{a^+}-t^{a^-}$ (this follows using that 
$t_i$ is a non-zero divisor of $S/I(X)$ for all $i$). To show the second equality, take
$t^{a^+}-t^{a^-}$ in $I(X)$. 
Then, by Lemma~\ref{dec19-11}, $a^+-a^-\in\mathcal{L}$ and $t^{a^+}-t^{a^-}$ is
in $I(\mathcal{L})$. Thus, $I(X)\subset I(\mathcal{L})$. 
Conversely, take $f=t^{a^+}-t^{a^-}$ in
$I(\mathcal{L})$ with $a^+-a^-$ in $\mathcal{L}$. Then, there is
$c\in\mathcal{L}_1$ such that $a^+-a^-=D(c^+-c^-)$. Then,
$t^{c^+}-t^{c^-}$ is in $P$ and maps, under the map of
Lemma~\ref{dec19-11}, to $f$. Thus, $f\in I(X)$. This proves that
$I(\mathcal{L})\subset I(X)$.
\end{proof}

\begin{proposition}\label{dec22-11}
If $P=(\{t^{a_i}-t^{b_i}\}_{i=1}^m)$, then
$I(X)=(\{t^{D(a_i)}-t^{D(b_i)}\}_{i=1}^m)$.
\end{proposition}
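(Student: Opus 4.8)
The plan is to introduce the $K$-algebra endomorphism $\sigma\colon S\to S$ determined by $\sigma(t_j)=t_j^{d_j}$, which realizes $D$ on exponents: $\sigma(t^c)=t^{D(c)}$ for every $c\in\mathbb{N}^n$. Writing $J=(\{t^{D(a_i)}-t^{D(b_i)}\}_{i=1}^m)$ and $g_i=t^{a_i}-t^{b_i}$, the generators of $J$ are exactly the images $\sigma(g_i)$, so the statement becomes $I(X)=(\sigma(g_1),\ldots,\sigma(g_m))$, where the $g_i$ generate $P$. I would prove the two inclusions separately.

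For $J\subseteq I(X)$ I would show each generator lies in $I(X)$. Since $g_i\in P=\ker(\varphi)=I(\mathcal{L}_1)$, one gets $\psi(a_i)=\psi(b_i)$, i.e.\ $a_i-b_i\in\ker(\psi)=\mathcal{L}_1$; applying $D$ gives $D(a_i)-D(b_i)=D(a_i-b_i)\in\mathcal{L}$. By Proposition~\ref{jan2-12} we have $I(X)=I(\mathcal{L})$, and since $D(a_i)-D(b_i)\in\mathcal{L}$ the binomial $t^{D(a_i)}-t^{D(b_i)}$ lies in $I(\mathcal{L})$ (after cancelling the common monomial factor one recovers, up to that factor, a defining generator $t^{\alpha^+}-t^{\alpha^-}$ of the lattice ideal). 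Hence $J\subseteq I(X)$.

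For $I(X)\subseteq J$, I would use that by Proposition~\ref{jan2-12} the ideal $I(X)=I(\mathcal{L})$ is generated by the binomials $t^{\alpha^+}-t^{\alpha^-}$ with $\alpha\in\mathcal{L}$, so it suffices to place each of these in $J$. Given $\alpha\in\mathcal{L}=D(\mathcal{L}_1)$, write $\alpha=D(\gamma)$ with $\gamma\in\mathcal{L}_1$; because $D$ scales each coordinate by $d_j>0$ it preserves signs, so $\alpha^+=D(\gamma^+)$, $\alpha^-=D(\gamma^-)$, and therefore $t^{\alpha^+}-t^{\alpha^-}=\sigma(t^{\gamma^+}-t^{\gamma^-})$. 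Now $t^{\gamma^+}-t^{\gamma^-}\in P$, so I can write $t^{\gamma^+}-t^{\gamma^-}=\sum_{i=1}^m f_i\,(t^{a_i}-t^{b_i})$ with $f_i\in S$; applying the ring homomorphism $\sigma$ to both sides yields $t^{\alpha^+}-t^{\alpha^-}=\sum_{i=1}^m \sigma(f_i)\,(t^{D(a_i)}-t^{D(b_i)})\in J$. Combining the two inclusions gives $I(X)=J$.

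The step I expect to carry the weight is this second inclusion, and within it the two facts that let $\sigma$ do all the bookkeeping: that $D$ commutes with the positive/negative-part decomposition (so the standard generators of $I(\mathcal{L})$ are precisely the $\sigma$-images of the standard generators of $P$), and that $\sigma$ is a ring homomorphism (so a representation of $t^{\gamma^+}-t^{\gamma^-}$ as an $S$-combination of the $g_i$ transports term-by-term to a representation of its image). The only genuine subtlety to guard against is that the given generators $t^{a_i}-t^{b_i}$ of $P$ need not individually have disjoint support, which is why I route the first inclusion through the membership criterion for $\mathcal{L}$ rather than directly through the disjoint-support bijection of Lemma~\ref{dec19-11}.
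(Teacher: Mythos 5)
Your proof is correct and takes essentially the same route as the paper's: both arguments hinge on transporting a representation $g=\sum_{i=1}^m f_ig_i$ in $P$ through the substitution $t_j\mapsto t_j^{d_j}$ (your $\sigma$, the paper's evaluation at $(t_1^{d_1},\ldots,t_n^{d_n})$), after reducing to disjoint-support binomial generators of $I(X)$ via the lattice correspondence --- your use of Proposition~\ref{jan2-12} together with the sign-preservation of $D$ is exactly the content of Lemma~\ref{dec19-11}, which the paper cites instead. If anything, your treatment of the inclusion $J\subseteq I(X)$ is slightly more careful than the paper's, since the given generators $t^{a_i}-t^{b_i}$ need not have disjoint support, a point the paper glosses over when it invokes Lemma~\ref{dec19-11} for that direction.
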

\begin{proof} We set $g_i=t^{a_i}-t^{b_i}$ and
$h_i=t^{D(a_i)}-t^{D(b_i)}$ for $i=1,\ldots,n$. Notice that
$h_i$ is equal to $g_i(t^{d_1},\ldots,t^{d_n})$, the evaluation of
$g_i$ at $(t_1^{d_1},\ldots,t_n^{d_n})$. By Lemma~\ref{dec19-11}, one has
the inclusion $(h_1,\ldots,h_m)\subset I(X)$. To show the reverse
inclusion take a binomial $0\neq f\in I(X)$. We may assume that
$f=t^{a^+}-t^{a^-}$. Then, by Lemma~\ref{dec19-11}, there is
$g=t^{c^+}-t^{c^-}$ in $P$ such that $f=t^{D(c^+)}-t^{D(c^-)}$. By
hypothesis we can write $g=\sum_{i=1}^mf_ig_i$ for some
$f_1,\ldots,f_m$ in $S$. Then, evaluating both sides of this equality at
$(t_1^{d_1},\ldots,t_n^{d_n})$, we get
$$
f=t^{D(c^+)}-t^{D(c^-)}=g(t_1^{d_1},\ldots,t_n^{d_n})
=\sum_{i=1}^mf_i(t_1^{d_1},\ldots,t_n^{d_n})g_i(t_1^{d_1},\ldots,t_n^{d_n})=
\sum_{i=1}^mf_i'h_i,
$$
where $f_i'=f_i(t_1^{d_1},\ldots,t_n^{d_n})$ for all $i$. Then, $f\in
(h_1,\ldots,h_m)$. 
\end{proof}

\begin{corollary}\label{jun25-12} If $n=3$, then $I(X)$ is minimally generated 
by at most $3$ binomials. 
\end{corollary}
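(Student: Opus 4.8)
The plan is to deduce the statement for $I(X)$ from the corresponding fact for the toric ideal $P$, transporting a small generating set across the correspondence established in Proposition~\ref{dec22-11}. Since $n=3$, the ideal $P$ is the toric ideal of the semigroup ring $K[\mathcal{S}]$ with $\mathcal{S}=\mathbb{N}d_1+\mathbb{N}d_2+\mathbb{N}d_3$, i.e. the defining ideal of a monomial curve parameterized by three monomials. The key input is the classical theorem of Herzog \cite{He3}, which asserts that such a $P$ is minimally generated by either $2$ or $3$ binomials. Thus I may write $P=(\{t^{a_i}-t^{b_i}\}_{i=1}^m)$ with $m\leq 3$, each generator being a binomial (every generator of a toric ideal can be taken of the form $t^{c^+}-t^{c^-}$, so this is automatic).

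Next I would feed this generating set directly into Proposition~\ref{dec22-11}. That proposition gives at once
$$
I(X)=\left(\{t^{D(a_i)}-t^{D(b_i)}\}_{i=1}^m\right),
$$
a generating set of $I(X)$ consisting of $m\leq 3$ binomials. To finish I would invoke the standard fact that, for a graded ideal, the minimal number of homogeneous generators is bounded above by the cardinality of any homogeneous generating set (graded Nakayama). Applying this to the generating set of size $m\leq 3$ just produced shows that $I(X)$ is minimally generated by at most $3$ binomials.

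I do not expect a serious obstacle here, as the argument is essentially a transport across the dictionary of Proposition~\ref{dec22-11}. The only point requiring care is making sure that Herzog's result indeed supplies a \emph{binomial} generating set of $P$ of size at most $3$, since Proposition~\ref{dec22-11} is phrased for an arbitrary binomial generating set of $P$; this is guaranteed because Herzog's minimal generators of the monomial-curve ideal are binomials.
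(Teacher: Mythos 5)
Your proof is correct and follows essentially the same route as the paper's: invoke Herzog's theorem to get a binomial generating set of $P$ with at most $3$ elements, then transport it to $I(X)$ via Proposition~\ref{dec22-11}. The only difference is that you spell out the graded-Nakayama step bounding the minimal number of generators, which the paper leaves implicit.
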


\begin{proof} By a classical theorem of Herzog \cite{He3}, $P$ is
generated by at most $3$ binomials. Hence, by
Proposition~\ref{dec22-11}, $I(X)$ is generated by at most $3$
binomials. 
\end{proof}

Given a subset $I\subset S$, its {\it variety\/}, denoted by $V(I)$,
is the set of all  
$a\in\mathbb{A}_K^n$ such that $f(a)=0$ for all $f\in I$, where
$\mathbb{A}_K^n$ is the affine space over $K$. Given a
binomial $g=t^a-t^b$, we set $\widehat{g}=a-b$. If $B$ is a 
subset of $\mathbb{Z}^n$, $\langle B\rangle$ denotes the subgroup of 
$\mathbb{Z}^n$ generated by $B$.

\begin{proposition}{\rm\cite[Proposition~2.5]{stcib}}\label{LaConcepcion-dec24-2011}
Let $\mathcal{B}=\{g_1,\ldots,g_{n-1}\}$ be a set of binomials in $P$.
Then, $P=(\mathcal{B})$ if and only if the following two conditions hold\/{\rm :}
\begin{itemize}
\item[$(\mathrm{i}')$] $\mathcal{L}_1=\langle
\widehat{g}_1,\ldots,\widehat{g}_{n-1}\rangle$, where
$\mathcal{L}_1=\ker(\psi)$. 
\item[$(\mathrm{ii}')$] $V(g_1,\ldots,g_{n-1},t_i)=\{0\}$ for $i=1,\ldots,n$.
\end{itemize}
\end{proposition}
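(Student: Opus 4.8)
The plan is to derive everything from the identification $P=I(\mathcal{L}_1)$ with $\mathcal{L}_1=\ker(\psi)$ (Proposition~\ref{jan2-12}), combined with two standard facts about lattice ideals (see \cite{EisStu,monalg}), which I will call \textup{(F1)} and \textup{(F2)}. \textup{(F1)}: a binomial $t^u-t^v$ lies in $I(\mathcal{L}_1)$ if and only if $u-v\in\mathcal{L}_1$, so a lattice is recovered from its lattice ideal as the set of exponent-differences of the binomials it contains. \textup{(F2)}: for a binomial ideal $J=(\{t^{a_i}-t^{b_i}\})$, the saturation $J:(t_1\cdots t_n)^\infty$ equals the lattice ideal $I(\mathcal{L}')$ of the lattice $\mathcal{L}'=\langle\widehat{g}_1,\ldots,\widehat{g}_{n-1}\rangle$ generated by the exponent-differences. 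Throughout I take $V(\cdot)$ over an algebraic closure of $K$, so that by the Nullstellensatz condition $(\mathrm{ii}')$ is equivalent to the ideal-theoretic statement $\sqrt{(g_1,\ldots,g_{n-1},t_i)}=\mathfrak{m}$, where $\mathfrak{m}=(t_1,\ldots,t_n)$.

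For the forward implication, I would assume $P=(\mathcal{B})$ and check the two conditions. For $(\mathrm{i}')$: each $g_k\in P=I(\mathcal{L}_1)$, so \textup{(F1)} gives $\widehat{g}_k\in\mathcal{L}_1$ and hence $\langle\widehat{g}_1,\ldots,\widehat{g}_{n-1}\rangle\subseteq\mathcal{L}_1$; conversely $I(\langle\widehat{g}_k\rangle)\supseteq(\mathcal{B})=P=I(\mathcal{L}_1)$ while the inclusion of lattices gives $I(\langle\widehat{g}_k\rangle)\subseteq I(\mathcal{L}_1)$, so the two lattice ideals coincide and \textup{(F1)} forces $\langle\widehat{g}_k\rangle=\mathcal{L}_1$. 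For $(\mathrm{ii}')$: since $\psi(d_ie_j-d_je_i)=0$, every binomial $t_j^{d_i}-t_i^{d_j}$ lies in $P$; thus a point of $V(P)$ with vanishing $i$-th coordinate has all coordinates zero, giving $V(P)\cap V(t_i)=\{0\}$, which is exactly $(\mathrm{ii}')$ because $(\mathcal{B})=P$.

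For the converse I would set $J=(\mathcal{B})$ and assume $(\mathrm{i}')$ and $(\mathrm{ii}')$. First, \textup{(F2)} together with $(\mathrm{i}')$ computes $J:(t_1\cdots t_n)^\infty=I(\langle\widehat{g}_k\rangle)=I(\mathcal{L}_1)=P$. Next I would locate the minimal primes of $J$: one avoiding all $t_i$ is a minimal prime of the saturation $P$, hence equals $P$ (and such a prime exists, since otherwise $t_1\cdots t_n\in\sqrt{J}$ and the saturation would be the unit ideal rather than $P$); one containing some $t_i$ contains $\sqrt{J+(t_i)}=\mathfrak{m}$ by $(\mathrm{ii}')$ and so equals $\mathfrak{m}$. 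Therefore $\mathrm{ht}(J)=n-1$, so the $n-1$ generators $g_1,\ldots,g_{n-1}$ form a regular sequence and $J$ is a complete intersection (Remark~\ref{future-use}); being Cohen--Macaulay, $J$ is unmixed, so all associated primes have height $n-1$. This excludes $\mathfrak{m}$ (height $n$) as an associated prime, leaving $P$ as the unique associated prime. Since $P$ is the toric ideal it contains no $t_i$, so $t_1\cdots t_n$ is a nonzerodivisor on $S/J$, and hence $J=J:(t_1\cdots t_n)^\infty=P$.

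The hard part will be the converse, and within it the final bootstrap. Saturation alone only yields $J:(t_1\cdots t_n)^\infty=P$; the real content is upgrading this to $J=P$, i.e.\ proving that $J$ is \emph{already} saturated. This is precisely where the two hypotheses must interact: $(\mathrm{i}')$ identifies the saturation as $P$, while $(\mathrm{ii}')$ pins the height at $n-1$ so that the complete-intersection/unmixedness dichotomy rules out the only possible obstruction to saturation, namely a component supported at the origin. The points I would verify most carefully are the two lattice-ideal facts \textup{(F1)} and \textup{(F2)} (in particular that the saturation returns the lattice $\langle\widehat{g}_k\rangle$ itself, not a larger saturated lattice), and the standard fact that an ideal of height $c$ generated by $c$ elements in the polynomial ring is automatically a complete intersection whose generators form a regular sequence.
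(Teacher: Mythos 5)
First, a structural remark: the paper never proves this proposition; it is quoted verbatim from \cite[Proposition~2.5]{stcib}, so your argument has to be judged against the statement itself and against the way the paper uses it. Most of your proof is sound. In the ``only if'' direction, (F1) does give $\langle\widehat{g}_1,\ldots,\widehat{g}_{n-1}\rangle=\mathcal{L}_1$, and the binomials $t_j^{d_i}-t_i^{d_j}\in P$ do force $V(P)\cap V(t_i)=\{0\}$ over any field. In the converse, the chain ``saturation $=P$ by (F2) and $(\mathrm{i}')$; minimal primes are $P$ or $\mathfrak{m}$; hence ${\rm ht}(J)=n-1$; hence complete intersection, unmixed, so ${\rm Ass}(J)=\{P\}$; hence $t_1\cdots t_n$ is a nonzerodivisor mod $J$ and $J=J:(t_1\cdots t_n)^\infty=P$'' is a correct and efficient argument, and both (F1) and (F2) are standard (F2 is essentially \cite[Lemma~2.3]{ci-lattice}, which this paper itself invokes in Proposition~\ref{computing-i(x)-saturation}). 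One quibble: Remark~\ref{future-use} concerns the standard grading, while the $g_k$ are homogeneous only for the grading $\deg(t_i)=d_i$; but since that grading is positive, the complete-intersection/regular-sequence fact you need still applies.

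The genuine gap is your opening convention. The paper defines $V(I)$ inside $\mathbb{A}_K^n$ with $K=\mathbb{F}_q$ finite, and that is the version it actually needs: in the proof of Theorem~\ref{ci-i(x)-p-1}(a), condition $(\mathrm{ii}')$ is verified only at points with coordinates in $K$ (every nonzero coordinate is written as $c_j=\beta^{j_\ell}$, a power of the generator of $K^*$, which is meaningless for $\overline{K}$-points). By declaring ``$V(\cdot)$ over an algebraic closure'' you silently strengthen the hypothesis $(\mathrm{ii}')$ in the converse direction, so what you prove is formally weaker than what the paper states and uses; and over a finite field $V_K(J+(t_i))=\{0\}$ does \emph{not} in general imply $\sqrt{J+(t_i)}=\mathfrak{m}$, so your Nullstellensatz step is simply unavailable under the paper's definition. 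The missing bridge --- true precisely because the $g_k$ are pure difference binomials --- is that $(\mathrm{ii}')$ over $K$ implies $(\mathrm{ii}')$ over $\overline{K}$: if $x\in V_{\overline{K}}(g_1,\ldots,g_{n-1},t_i)$ is nonzero, let $T=\{j\,\vert\, x_j\neq 0\}$ and let $e_T$ be the point with $j$-th coordinate $1$ for $j\in T$ and $0$ otherwise. For each $k$, the equality $x^{a_k}=x^{b_k}$ forces ${\rm supp}(t^{a_k})\subseteq T$ if and only if ${\rm supp}(t^{b_k})\subseteq T$, and in either case $g_k(e_T)$ equals $1-1=0$ or $0-0=0$; also $i\notin T$, so $e_T$ is a nonzero point of $V_K(g_1,\ldots,g_{n-1},t_i)$, a contradiction. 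With that short paragraph inserted, your proof is a complete and correct proof of the proposition as this paper states and applies it.
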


We come to the main result of this section.

\begin{theorem}\label{ci-i(x)-p-1} 
$(\mathrm{a})$ If $I(X)$ is a complete intersection generated by 
binomials
$h_1,\ldots,h_{n-1}$, then $P$ is a complete intersection 
generated by binomials $g_1,\ldots,g_{n-1}$ such that
$h_i$ is equal to $g_i(t_1^{d_1},\ldots,t_n^{d_n})$ for all $i$.
$(\mathrm{b})$ If $P$ is a complete intersection generated by
binomials  
$g_1,\ldots,g_{n-1}$, then $I(X)$ is a complete intersection 
generated by binomials $h_1,\ldots,h_{n-1}$, where 
$h_i$ is equal to $g_i(t_1^{d_1},\ldots,t_n^{d_n})$ for all $i$.  
\end{theorem}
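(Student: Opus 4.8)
The plan is to handle the two implications with the substitution map $g\mapsto g(t_1^{d_1},\ldots,t_n^{d_n})$ and the lattice isomorphism $D\colon\mathcal{L}_1\to\mathcal{L}$ as the bridge, and to invoke Remark~\ref{future-use} to turn ``generated by $n-1$ binomials'' into ``complete intersection'' on each side. Part~$(\mathrm b)$ should be immediate: from binomial generators $g_1,\ldots,g_{n-1}$ of $P$, Proposition~\ref{dec22-11} gives $I(X)=(h_1,\ldots,h_{n-1})$ with $h_i=g_i(t_1^{d_1},\ldots,t_n^{d_n})$; since each $h_i$ is a binomial and ${\rm ht}\,I(X)=n-1$, Remark~\ref{future-use} shows $I(X)$ is a complete intersection on these generators, which is the claim.

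For part~$(\mathrm a)$ I would first note that $h_1,\ldots,h_{n-1}$ is necessarily a \emph{minimal} generating set, because a complete intersection of height $n-1$ cannot be generated by fewer than $n-1$ elements. I then claim each $h_i$ has disjoint support: writing $h_i=t^{c}(t^{a'}-t^{b'})$ with $t^{a'},t^{b'}$ of disjoint support, the fact that each variable is a nonzerodivisor on $S/I(X)$ forces $t^{a'}-t^{b'}\in I(X)$, so if $c\neq 0$ then $h_i\in\mathfrak{m}\,I(X)$ with $\mathfrak{m}=(t_1,\ldots,t_n)$, contradicting minimality. With disjoint support available, Lemma~\ref{dec19-11} supplies the unique binomials $g_i=t^{a_i}-t^{b_i}\in P$ of disjoint support with $h_i=g_i(t_1^{d_1},\ldots,t_n^{d_n})$ and $\widehat{h}_i=D(\widehat{g}_i)$.

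It remains to show $P=(g_1,\ldots,g_{n-1})$, after which Remark~\ref{future-use} again yields the complete intersection property since ${\rm ht}\,P=n-1$. I would verify the two conditions of Proposition~\ref{LaConcepcion-dec24-2011}. Applying that proposition to the lattice ideal $I(X)=I(\mathcal{L})$ (the underlying result of \cite{stcib} holds for any homogeneous lattice ideal of this type), the hypothesis $I(X)=(h_1,\ldots,h_{n-1})$ gives $\mathcal{L}=\langle\widehat{h}_1,\ldots,\widehat{h}_{n-1}\rangle$ and $V(h_1,\ldots,h_{n-1},t_i)=\{0\}$ for all $i$. Since $\mathcal{L}=D(\mathcal{L}_1)$, $\widehat{h}_i=D(\widehat{g}_i)$, and $D$ is a $\mathbb{Z}$-isomorphism, applying $D^{-1}$ gives $\mathcal{L}_1=\langle\widehat{g}_1,\ldots,\widehat{g}_{n-1}\rangle$, which is condition $(\mathrm{i}')$. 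For $(\mathrm{ii}')$ I would argue over $\overline{K}$: given $b$ with $g_j(b)=0$ for all $j$ and $b_i=0$, pick $a$ with $a_k^{d_k}=b_k$ for all $k$; then $h_j(a)=g_j(a_1^{d_1},\ldots,a_n^{d_n})=g_j(b)=0$ and $a_i=0$, so $a\in V(h_1,\ldots,h_{n-1},t_i)=\{0\}$, whence $a=0$ and $b=0$. Thus $(\mathrm{ii}')$ holds and Proposition~\ref{LaConcepcion-dec24-2011} gives $P=(g_1,\ldots,g_{n-1})$.

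The step I expect to be most delicate is the variety condition $(\mathrm{ii}')$: the $d_k$-th roots needed to lift a point of $V(g_1,\ldots,g_{n-1},t_i)$ exist only over $\overline{K}$, so the conditions of Proposition~\ref{LaConcepcion-dec24-2011} must be read geometrically, and I would justify passing between $K$ and $\overline{K}$ by noting that the faithfully flat extension $K\hookrightarrow\overline{K}$ does not change whether a fixed finite set of binomials generates a given ideal. A secondary point to record carefully is the legitimacy of applying Proposition~\ref{LaConcepcion-dec24-2011}, stated for $P$ and $\mathcal{L}_1$, to the companion ideal $I(X)=I(\mathcal{L})$.
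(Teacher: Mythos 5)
Your proposal follows the same architecture as the paper's proof. Part (b) is handled exactly as in the paper (Proposition~\ref{dec22-11} plus Remark~\ref{future-use}), and in part (a) your route --- disjoint supports of the $h_i$, lifting them through $D$ to binomials $g_i\in P$, then verifying conditions $(\mathrm{i}')$ and $(\mathrm{ii}')$ of Proposition~\ref{LaConcepcion-dec24-2011} --- is the paper's route; your minimality argument for disjoint supports is in fact a welcome expansion of the paper's ``it is not hard to see''. The one genuinely different ingredient is the verification of $(\mathrm{ii}')$: you lift a point of $V(g_1,\ldots,g_{n-1},t_i)$ by extracting $d_k$-th roots over $\overline{K}$, whereas the paper pushes the point forward via $c\mapsto(c_1^{v_1},\ldots,c_n^{v_n})$ and checks, by a case analysis using $(\beta^{v_j})^{d_j}=1$, that the image lies in $V(h_1,\ldots,h_{n-1},t_i)$. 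The paper's pushforward is designed to stay inside $K$, consistent with its definition of $V(\cdot)$ as a subset of $\mathbb{A}_K^n$, so it never needs $\overline{K}$; yours is shorter but forces the base-change discussion you flag. That discussion does close coherently: verifying $(\mathrm{ii}')$ over $\overline{K}$ is stronger than verifying it over $K$ (since $V_K\subset V_{\overline{K}}$), so your check feeds into Proposition~\ref{LaConcepcion-dec24-2011} under either reading of the varieties.

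The genuine gap is the parenthetical claim that Proposition~\ref{LaConcepcion-dec24-2011} ``holds for any homogeneous lattice ideal of this type'', which you apply to $I(X)=I(\mathcal{L})$ to obtain both $\mathcal{L}=\langle\widehat{h}_1,\ldots,\widehat{h}_{n-1}\rangle$ and $V(h_1,\ldots,h_{n-1},t_i)=\{0\}$. That proposition is stated and proved in \cite{stcib} for the toric ideal $P$ of a monomial curve, which is prime; $I(X)$ is almost never prime (it is the radical ideal of a finite set of points), so this is not an application but an unproven generalization, and it is doing real work. Specifically, the implication $I(X)=(h_1,\ldots,h_{n-1})\Rightarrow\mathcal{L}=\langle\widehat{h}_1,\ldots,\widehat{h}_{n-1}\rangle$ is precisely the nontrivial Lemma~2.5 of \cite{ci-lattice}, which is what the paper cites at this exact point; your write-up needs that citation (or a proof) rather than the parenthetical assertion. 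The variety half, by contrast, admits a one-line direct proof, which you should use and which you in any case need over $\overline{K}$ for the root-lifting step: since $\mathbb{F}_q^*$ has order $q-1$, the binomials $t_k^{q-1}-t_i^{q-1}$ lie in $I(X)=(h_1,\ldots,h_{n-1})$ for all $k$, so any point of $V_{\overline{K}}(h_1,\ldots,h_{n-1},t_i)$ has all of its coordinates with vanishing $(q-1)$-st power and is therefore $0$. With \cite[Lemma~2.5]{ci-lattice} replacing the generalization claim and this direct argument supplying the variety condition over $\overline{K}$, your proof is complete and correct.
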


\begin{proof} (a) Since $t_k$ is a non-zero divisor of $S/I(X)$ for
all $k$, it is not hard to see that the monomials of $h_i$ have
disjoint support for all $i$, i.e., we can write $h_i=t^{a_i^+}-t^{a_i^-}$ for
$i=1,\ldots,n-1$. We claim that the following two conditions hold.
\begin{itemize}
\item[$(\mathrm{i})$]
$\mathcal{L}=\langle a_1,\ldots,a_{n-1}\rangle$, where $a_i=a_i^+-a_i^-$
and $\mathcal{L}$ is the lattice that defines $I(X)$. 
\item[$(\mathrm{ii})$] $V(h_1,\ldots,h_{n-1},t_i)=\{0\}$ for $i=1,\ldots,n$.
\end{itemize} 

As $I(X)$ is generated by $h_1,\ldots,h_{n-1}$, by
\cite[Lemma~2.5]{ci-lattice}, condition (i) holds. The binomial
$t_i^{q-1}-t_n^{q-1}$ is in $I(X)$ for all $i$ because
$\mathbb{F}_q^*$ is a group of order $q-1$. Thus, 
$V(I(X),t_i)=\{0\}$ for all $i$. From the equality
$(h_1,\ldots,h_{n-1},t_i)=(I(X),t_i)$,
we get
$$
V(h_1,\ldots,h_{n-1},t_i)=V(I(X),t_i)=\{0\}. 
$$
Thus, (ii) holds. This completes the proof of the claim. 

By (i) and Proposition~\ref{jan2-12}, there are $b_1,\ldots,b_{n-1}$
in $\mathcal{L}_1={\rm ker}(\psi)$ such that
$a_i=D(b_i)$ for all $i$. Accordingly $a_i^+=D(b_i^+)$ and
$a_i^-=D(b_i^-)$ for all $i$. We set $g_i=t^{b_i^+}-t^{b_i^-}$ for all
$i$. Clearly, all the $g_i$'s are in $P$ and $h_i$ is equal to
$g_i(t_1^{d_1},\ldots,t_n^{d_n})$ for all $i$. Next, we prove that $P$ is generated by
$g_1,\ldots,g_{n-1}$. By Proposition~\ref{LaConcepcion-dec24-2011} it
suffices to show that the following two conditions hold:
\begin{itemize}
\item[$(\mathrm{i}')$] $\mathcal{L}_1=\langle
b_1,\ldots,b_{n-1}\rangle$, where
$\mathcal{L}_1=\ker(\psi)$. 
\item[$(\mathrm{ii}')$] $V(g_1,\ldots,g_{n-1},t_i)=\{0\}$ for $i=1,\ldots,n$.
\end{itemize}

First we show $(\mathrm{i}')$. Since $b_1,\ldots,b_{n-1}$ are in $\mathcal{L}_1$,
we need only show the inclusion ``$\subset$''. Take $\gamma\in{\rm
ker}(\psi)$, then $D(\gamma)\in 
\mathcal{L}$, and by (i) it follows that $\gamma\in \langle
b_1,\ldots,b_{n-1}\rangle$. 

Next we show $(\mathrm{ii}')$. For simplicity of notation,
we may assume that $i=n$. Take $c$ in the variety
$V(g_1,\ldots,g_{n-1},t_n)$ and write $c=(c_1,\ldots,c_n)$.  Then, $c_n=0$ and
$g_i(c)=c^{b_i^+}-c^{b_i^-}=0$ for all $i$,
were $c^{b_i^+}$ means to evaluate the monomial $t^{b_i^+}$ 
at the point $c$. Let $i$ be a fixed but arbitrary integer in 
$\{1,\ldots,n-1\}$. We can write
$$
b_i=b_i^+-b_i^-=(b_{i1}^+,\ldots,b_{in}^+)-(b_{i1}^-,\ldots,b_{in}^-)
$$
and
$a_i=a_i^+-a_i^-=(a_{i1}^+,\ldots,a_{in}^+)-(a_{i1}^-,\ldots,a_{in}^-)$.  
Then
\begin{eqnarray}
h_i(c_1^{v_1},\ldots,c_n^{v_n})&=&(c_1^{v_1})^{a_{i1}^+}\cdots
(c_n^{v_n})^{a_{in}^+}-(c_1^{v_1})^{a_{i1}^-}\cdots
(c_n^{v_n})^{a_{in}^-}\nonumber\\
&=&c_1^{v_1d_1b_{i1}^+}\cdots
c_n^{v_nd_nb_{in}^+}-c_1^{v_1d_1b_{i1}^-}\cdots
c_n^{v_nd_nb_{in}^-}.\label{jun17-12}
\end{eqnarray}
We claim that $h_i(c_1^{v_1},\ldots,c_n^{v_n})=0$. To show this we
consider two cases. 

Case (I): $b_{in}^+>0$. Then, as $g_i(c)=c^{b_i^+}-c^{b_i^-}=0$ and
$c^{b_i^+}$=0, 
one has $c^{b_i^-}=0$. Hence, there is $j$ such that $b_{ij}^->0$ and
$c_j=0$. Thus, by Eq.~(\ref{jun17-12}), $h_i(c_1^{v_1},\ldots,c_n^{v_n})=0$.

Case (II): $b_{in}^+=0$. If $c_j=0$ for some $b_{ij}^+>0$, then 
$c^{b_i^-}=0$ because $g_i(c)=0$. Hence, there is $k$ such that
$c_k=0$ and $b_{ik}^->0$. Thus, by Eq.~(\ref{jun17-12}),
$h_i(c_1^{v_1},\ldots,c_n^{v_n})=0$. 
Similarly, if $c_j=0$ for some $b_{ij}^->0$, then  
$c^{b_i^+}=0$ because $g_i(c)=0$. Hence, there is $k$ such that
$c_k=0$ and $b_{ik}^+>0$. Thus, by Eq.~(\ref{jun17-12}),
$h_i(c_1^{v_1},\ldots,c_n^{v_n})=0$.  
We may now assume that $c_j\neq 0$ if $b_{ij}^+>0$, and $c_m\neq 0$
if $b_{im}^->0$. Let $\beta$ be a generator of the cyclic group
$(\mathbb{F}^*,\, \cdot\, )$. Any $c_j\neq 0$ has the form $c_j=\beta^{j_\ell}$.
Thus,  using that $(\beta^{v_j})^{d_j}=1$, we get that
$(c_j^{v_j})^{d_jb_{ij}^+}=1$ if $b_{ij}^+>0$ and 
$(c_j^{v_j})^{d_jb_{ij}^-}=1$ if $b_{ij}^->0$. Hence, by
Eq.~(\ref{jun17-12}), $h_i(c_1^{v_1},\ldots,c_n^{v_n})=0$, 
as required. This completes the proof of the claim. 

As $h_i(c_1^{v_1},\ldots,c_n^{v_n})=0$ for all $i$, the point 
$c'=(c_1^{v_1},\ldots,c_n^{v_n})$ is in 
$V(h_1,\ldots,h_{n-1},t_n)$. By (ii), the point $c'$ es zero. Hence, 
$c=0$ as required. This completes the proof of $(\mathrm{ii}')$.
Hence, $P$ is a complete
intersection generated by $g_1,\ldots,g_{n-1}$. 

(b) It follows from Proposition~\ref{dec22-11}. 
\end{proof}

Using the notion of a binary 
tree, a criterion for complete intersection
toric ideals of affine monomial curves is given 
in \cite{stcib}. In
\cite{stcib-algorithm} an effective algorithm is 
given to determine whether $P$ is a complete intersection. If $P$ is 
a complete intersection, this algorithm returns the generators of $P$
and the Frobenius number. 

In our situation, the next result allows us to: (A) use the
results of \cite{stcib,delorme,He3} to give criteria for complete intersection
vanishing ideals over a finite field, (B) use the effective 
algorithms of \cite{stcib-algorithm} to recognize complete
intersection vanishing ideals over finite fields and to compute its 
invariants (see Example~\ref{jun20-12}).

\begin{corollary}\label{ci-i(x)-p}  $I(X)$ is a complete intersection if
and only if $P$ is a complete intersection.
\end{corollary}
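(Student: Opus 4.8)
The plan is to deduce the corollary directly from Theorem~\ref{ci-i(x)-p-1}, the only subtlety being to first put both ideals into the form required by that theorem, namely generated by binomials. I would begin by recording the structural facts already available: by Proposition~\ref{jan2-12} both $P=I(\mathcal{L}_1)$ and $I(X)=I(\mathcal{L})$ are graded binomial (indeed lattice) ideals, and both have height $n-1$. This is noted for $P$ in the introduction, and it holds for $I(X)$ as well, since $I(X)$ is likewise a Cohen-Macaulay graded lattice ideal of dimension $1$ sitting inside the $n$-dimensional ring $S$.

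For the forward implication, suppose $I(X)$ is a complete intersection. Since $I(X)$ is a graded binomial ideal of height $n-1$, Remark~\ref{future-use} guarantees that it is generated by a set of $n-1$ homogeneous binomials $h_1,\ldots,h_{n-1}$. I would then simply apply part $(\mathrm{a})$ of Theorem~\ref{ci-i(x)-p-1} to this generating set to conclude that $P$ is a complete intersection.

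For the converse, suppose $P$ is a complete intersection. By the same application of Remark~\ref{future-use}, this time to the graded binomial ideal $P$ of height $n-1$, it is generated by $n-1$ homogeneous binomials $g_1,\ldots,g_{n-1}$; applying part $(\mathrm{b})$ of Theorem~\ref{ci-i(x)-p-1} then yields that $I(X)$ is a complete intersection.

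There is essentially no hard step here, since the entire content has been isolated into Theorem~\ref{ci-i(x)-p-1}. The one point that must not be overlooked --- and the only place I expect to need any care --- is the passage from the abstract complete intersection hypothesis to a generating set consisting of \emph{binomials}, so that the theorem genuinely applies; this is precisely what Remark~\ref{future-use} supplies, using that $P$ and $I(X)$ are graded binomial ideals whose height equals $n-1$.
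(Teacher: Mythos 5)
Your proposal is correct and matches the paper's own proof essentially verbatim: the paper also invokes Remark~\ref{future-use} to extract a binomial generating set of size $n-1$ and then applies Theorem~\ref{ci-i(x)-p-1}, handling the converse ``by similar reasons,'' which is exactly your symmetric application of the remark to $P$ (graded via $\deg(t_i)=d_i$) followed by part $(\mathrm{b})$ of the theorem.
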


\begin{proof} Assume that $I(X)$ is a complete intersection. By
Remark~\ref{future-use}, there are binomials $h_1,\ldots,h_{n-1}$ that
generate $I(X)$. Hence, $P$ is a complete intersection by
Theorem~\ref{ci-i(x)-p-1}. The converse follows by similar reasons. 
\end{proof}

\begin{lemma}\label{jan1-12} If $r=\gcd(d_1,\ldots,d_n)$ and
$d_i'=o(\beta^{rv_i})$, then $d_i=rd_i'$ and 
$\gcd(d_1',\ldots,d_n')=1$. 
\end{lemma}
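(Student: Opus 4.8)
The plan is to reduce the statement to two elementary facts about finite cyclic groups and greatest common divisors: the order formula $o(g^k)=o(g)/\gcd(o(g),k)$ for an element $g$ of a finite cyclic group, and the scaling identity $\gcd(a_1/r,\ldots,a_n/r)=\gcd(a_1,\ldots,a_n)/r$, which is valid whenever $r$ divides each $a_i$.

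First I would rewrite $\beta^{rv_i}=(\beta^{v_i})^r$, so that $d_i'=o(\beta^{rv_i})$ is simply the order of the $r$-th power of $\beta^{v_i}$. Since $\beta^{v_i}$ has order $d_i$ by definition, the order formula gives $d_i'=o((\beta^{v_i})^r)=d_i/\gcd(d_i,r)$. The key observation is then that $r=\gcd(d_1,\ldots,d_n)$ divides every $d_i$, so that $\gcd(d_i,r)=r$; hence $d_i'=d_i/r$, which is exactly the first assertion $d_i=rd_i'$.

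For the second assertion I would feed the relation $d_i'=d_i/r$ into the gcd-scaling identity. Because $r\mid d_i$ for all $i$, we obtain $\gcd(d_1',\ldots,d_n')=\gcd(d_1/r,\ldots,d_n/r)=\gcd(d_1,\ldots,d_n)/r=r/r=1$, as claimed.

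I do not expect any genuine obstacle here, since the content is purely arithmetic; there is no need to invoke any of the lattice-theoretic machinery of the earlier sections. The only point requiring a moment of care is the divisibility $r\mid d_i$ for all $i$, but this is immediate from $r$ being defined as $\gcd(d_1,\ldots,d_n)$. This single divisibility is precisely what collapses $\gcd(d_i,r)$ to $r$ in the first step and what licenses the gcd-scaling identity in the second.
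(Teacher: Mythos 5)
Your proof is correct and follows the same route as the paper's own (one-line) proof, which likewise rests on the order formula $o(\beta^{rv_i})=o(\beta^{v_i})/\gcd(r,o(\beta^{v_i}))$; you have merely filled in the details the paper leaves to the reader, namely that $r\mid d_i$ forces $\gcd(d_i,r)=r$ and that the gcd-scaling identity yields $\gcd(d_1',\ldots,d_n')=1$. No gaps.
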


\begin{proof} It follows readily by recalling that 
$o(\beta^{rv_i})=o(\beta^{v_i})/\gcd(r,o(\beta^{v_i}))$.
\end{proof}

In what follows $X'$ will denote the degenerate torus in 
$\mathbb{P}^{n-1}$ parameterized by $x_1^{v_1'},\ldots,x_n^{v_n'}$, where 
$v_i'=rv_i$ and $r=\gcd(d_1,\ldots,d_n)$. Below, we relate $I(X)$ and $I(X')$. 

\begin{proposition}\label{ci-x-x'} The vanishing ideal $I(X)$ is a 
complete intersection if and only if  $I(X')$ is a
complete intersection. 
\end{proposition}

\begin{proof} Let $P$ and $P'$ be the toric ideals 
of $K[y_1^{d_1},\ldots,y_1^{d_n}]$ and
$K[y_1^{d_1'},\ldots,y_1^{d_n'}]$, 
respectively, where $d_i'=o(\beta^{rv_i})$ for all $i$. 
It is not hard to see that $P=P'$. Then, by
Theorem~\ref{ci-i(x)-p-1}, $P$ is a
complete intersection if and only if $I(X)$ is a complete intersection
and $P'$ is a complete intersection 
if and only if $I(X')$ is a complete intersection. Thus, $I(X)$ is a 
complete intersection if and only if  $I(X')$ is a
complete intersection.
\end{proof}

\begin{definition}\label{projectivetorus-def} The set 
$X^*:=\{(x_1^{v_{1}},\ldots,x_n^{v_n})\vert\, x_i\in
K^*\mbox{ for all }i\}\subset{K}^{n}$ 
is called an {\it affine degenerate torus\/} 
parameterized by $x_1^{v_1},\ldots,x_n^{v_n}$.
\end{definition}

\begin{lemma}\label{dec12-11} $|X^*|=d_1\cdots d_n$ and
$\deg(S/I(X))=|X|=d_1\cdots d_n/\gcd(d_1,\ldots,d_n)$. 
\end{lemma}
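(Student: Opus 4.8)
The plan is to establish the two assertions separately, starting with the cardinality of the affine torus $X^*$ and then passing to the projective count. First I would compute $|X^*|$ directly from the parameterization. Writing $\beta$ for a fixed generator of the cyclic group $(K^*,\cdot)$, every nonzero $x_i$ has the form $x_i=\beta^{a_i}$, so the $i$th coordinate of a point of $X^*$ is $x_i^{v_i}=\beta^{a_iv_i}=(\beta^{v_i})^{a_i}$. As $x_i$ ranges over $K^*$, the value $(\beta^{v_i})^{a_i}$ ranges over the cyclic subgroup $\langle\beta^{v_i}\rangle$ of $K^*$, which has exactly $d_i=o(\beta^{v_i})$ elements. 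Since the coordinates are chosen independently, the map $(x_1,\ldots,x_n)\mapsto(x_1^{v_1},\ldots,x_n^{v_n})$ realizes $X^*$ as the product of these $n$ subgroups, giving $|X^*|=d_1\cdots d_n$.

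For the second equality I would use the fact, already recorded in Section~\ref{prelim}, that $\deg(S/I(X))=|X|$, so it suffices to count $|X|$. The natural tool is the canonical surjection $\pi\colon X^*\to X$ sending $(y_1,\ldots,y_n)$ to its projective class $[(y_1,\ldots,y_n)]$. This map is surjective by the definitions of $X$ and $X^*$, so I would count the fibers. Two points $y,y'\in X^*$ have the same image precisely when $y'=\lambda y$ for some $\lambda\in K^*$; the task is to determine for how many scalars $\lambda$ the scaled point $\lambda y$ again lies in $X^*$. Since each coordinate lives in $\langle\beta^{v_i}\rangle$, scaling by $\lambda=\beta^{s}$ keeps $\lambda y$ inside $X^*$ if and only if $\beta^{s}$ lies in every subgroup $\langle\beta^{v_i}\rangle$, i.e. if and only if $d_i\mid s$ for all $i$, equivalently $\mathrm{lcm}(d_1,\ldots,d_n)\mid s$ after accounting for $q-1$. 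The subtlety here is that $\lambda$ ranges over the finite group $K^*$ of order $q-1$, so the number of admissible scalars is the order of the subgroup $\bigcap_i\langle\beta^{v_i}\rangle$ of $K^*$.

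The main obstacle, and the step I would treat most carefully, is pinning down this common fiber size as a divisor relation. The intersection $\bigcap_i\langle\beta^{v_i}\rangle$ is the cyclic subgroup of $K^*$ whose order is $\gcd(d_1,\ldots,d_n)=r$, since $\langle\beta^{v_i}\rangle$ is the unique subgroup of order $d_i$ in the cyclic group $K^*$ and the intersection of the unique subgroups of orders $d_1,\ldots,d_n$ is the unique subgroup of order $\gcd(d_1,\ldots,d_n)$. Hence each fiber of $\pi$ has exactly $r$ elements, and all fibers have the same size because $X^*$ is a group acting transitively on fibers by the scaling described above. Therefore
\[
|X|=\frac{|X^*|}{r}=\frac{d_1\cdots d_n}{\gcd(d_1,\ldots,d_n)},
\]
which together with $\deg(S/I(X))=|X|$ finishes the proof. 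I would make sure to justify the uniform fiber size cleanly, as that is where an incautious argument could slip; invoking the cyclic structure of $K^*$ and the characterization of its subgroups by order keeps this step rigorous and short.
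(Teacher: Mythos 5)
Your proof is correct and follows essentially the same route as the paper's: compute $|X^*|=d_1\cdots d_n$ via the product decomposition $X^*=\langle\beta^{v_1}\rangle\times\cdots\times\langle\beta^{v_n}\rangle$, then divide by the size of the fibers of the quotient map $X^*\to X$, identifying the admissible scalars with the subgroup $\bigcap_{i}\langle\beta^{v_i}\rangle$ of $K^*$, whose order is $\gcd(d_1,\ldots,d_n)$ by the structure of subgroups of a cyclic group (the paper phrases this as the kernel of the group epimorphism $X^*\to X$, which is the diagonal copy of that intersection). One small slip: the aside claiming $\beta^s\in\langle\beta^{v_i}\rangle$ iff $d_i\mid s$ is wrong (the correct condition is $(q-1)/d_i\mid s$), but it is harmless since you never use it --- the count rests on the correct statement that the admissible scalars form $\bigcap_i\langle\beta^{v_i}\rangle$ and that this subgroup has order $\gcd(d_1,\ldots,d_n)$.
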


\begin{proof} Let $S_{i}=\langle\beta^{v_i}\rangle$
be the cyclic group generated by $\beta^{v_i}$. The set 
$X^*$ is equal to the cartesian product $S_1\times\cdots\times S_n$. 
Hence, to show the first equality, it suffices to recall that $|S_i|$
is $o(\beta^{v_i})$,  
the order of $\beta^{v_i}$. Notice that any element of $X^*$ can be written as
$((\beta^{i_1})^{v_1},\ldots,(\beta^{i_n})^{v_n})$ for some integers
$i_1,\ldots,i_n$. The kernel of the 
epimorphism  of groups $X^*\mapsto X$, $x\mapsto [x]$, is
equal to 
$$
\{(\gamma,\ldots,\gamma)\in (K^*)^n\colon \gamma\in\langle 
\beta^{v_1}\rangle\cap\cdots\cap\langle\beta^{v_n}\rangle \}.
$$
Hence, $|X^*|/|\cap_{i=1}^n\langle\beta^{v_i}\rangle|=|X|$. Since
$\langle\beta^{v_i}\rangle$ is a subgroup of $K^*$ for all $i$ and
$K^*$ is a cyclic group, one has
$|\cap_{i=1}^n\langle\beta^{v_i}\rangle|=\gcd(d_1,\ldots,d_n)$ (see
for instance \cite[Theorem~4, p.~4]{alperin}). Thus, the second
equality follows.
\end{proof}

\begin{definition}\label{frobenius-number-def} If $\mathcal{S}$ is a
numerical semigroup of 
$\mathbb{N}$, the {\it Frobenius number\/} 
of $\mathcal{S}$, denoted by $g(\mathcal{S})$, is the largest 
integer not in $\mathcal{S}$. 
\end{definition}

Consider the semigroup
$\mathcal{S}'=\mathbb{N}d_1'+\cdots+\mathbb{N}d_n'$, where 
$d_i'=o(\beta^{rv_i})$ for $i=1,\ldots,n$. 
By Lemma~\ref{jan1-12}, one has $\gcd(d_1',\ldots,d_n')=1$, i.e., 
$\mathcal{S}'$ is a numerical semigroup. Thus, $g(\mathcal{S}')$ is
finite. If the toric ideal 
of $K[\mathcal{S}']$ is a complete intersection, 
then $g(\mathcal{S}')$ can be expressed entirely in terms of
$d_1',\ldots,d_n'$ \cite[Remark~4.5]{stcib}.

\begin{corollary}\label{ci-formula-gcd=1} $\mathrm{(i)}$ ${\rm
deg}(S/I(X))=d_1\cdots d_n/\gcd(d_1,\ldots,d_n)$.

$\mathrm{(ii)}$ If $I(X)$ is a complete intersection, then 
$$\textstyle{\rm reg}\,
S/I(X)=\gcd(d_1,\ldots,d_n)\, g(\mathcal{S}')+\sum_{i=1}^n d_i-(n-1).$$
\end{corollary}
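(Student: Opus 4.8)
The plan is to settle (i) by citation and to reduce (ii) to a weighted-degree count controlled by the complete intersection structure. Part (i) is exactly the second equality of Lemma~\ref{dec12-11}, so I would simply invoke it. For (ii), the entry point is that when $I(X)$ is a complete intersection, Remark~\ref{future-use} lets me write it as generated by homogeneous binomials $h_1,\ldots,h_{n-1}$ forming a regular sequence in the standard grading, so Lemma~\ref{hilbertseries-ci-coro} yields
\[
{\rm reg}(S/I(X))=\sum_{i=1}^{n-1}(\deg(h_i)-1)=\Bigl(\sum_{i=1}^{n-1}\deg(h_i)\Bigr)-(n-1).
\]
Everything then reduces to evaluating $\sum_i\deg(h_i)$ in the standard grading.

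The next step is to track this quantity across the relevant gradings. By Theorem~\ref{ci-i(x)-p-1}(a), $P$ is a complete intersection generated by binomials $g_i=t^{b_i^+}-t^{b_i^-}$ with $h_i=g_i(t_1^{d_1},\ldots,t_n^{d_n})=t^{D(b_i^+)}-t^{D(b_i^-)}$, so the standard degree of $h_i$ is $|D(b_i^+)|=\psi(b_i^+)$, i.e. the degree of $g_i$ in the grading of $S$ giving $t_j$ weight $d_j$. Introducing the weight $\psi'(e_j)=d_j'$ with $d_j=rd_j'$ and $r=\gcd(d_1,\ldots,d_n)$, I would note that $\psi=r\psi'$ forces $\ker\psi=\ker\psi'$, so $P=P'$ exactly as asserted in the proof of Proposition~\ref{ci-x-x'}; hence the same $g_1,\ldots,g_{n-1}$ minimally generate the complete intersection toric ideal $P'$ of $K[\mathcal{S}']$. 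Writing $\delta_i'$ for the $\psi'$-degree of $g_i$, the scaling $d_j=rd_j'$ gives $\deg(h_i)=\psi(b_i^+)=r\,\psi'(b_i^+)=r\delta_i'$.

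The decisive external input is the complete intersection Frobenius formula for numerical semigroups, \cite[Remark~4.5]{stcib}, which expresses $g(\mathcal{S}')=\bigl(\sum_{i=1}^{n-1}\delta_i'\bigr)-\sum_{i=1}^n d_i'$ in terms of the generator degrees. Substituting $\sum_i\delta_i'=g(\mathcal{S}')+\sum_i d_i'$ gives
\[
\sum_{i=1}^{n-1}\deg(h_i)=r\sum_{i=1}^{n-1}\delta_i'=r\,g(\mathcal{S}')+\sum_{i=1}^n rd_i'=r\,g(\mathcal{S}')+\sum_{i=1}^n d_i,
\]
so ${\rm reg}(S/I(X))=r\,g(\mathcal{S}')+\sum_{i=1}^n d_i-(n-1)$, which is the claimed formula since $r=\gcd(d_1,\ldots,d_n)$.

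I expect the main obstacle to be conceptual rather than computational: the one genuinely nontrivial ingredient is the cited Frobenius formula for complete intersection numerical semigroups, and the proof will stand or fall on invoking it correctly. The remaining work is careful grading bookkeeping — confirming that the standard degree of $h_i$ coincides with the $\psi$-weighted degree of $g_i$, that $P=P'$, and that the $\psi$- and $\psi'$-gradings differ precisely by the factor $r$. The most error-prone spot will be this scaling $d_i=rd_i'$, since a misplaced factor of $r$ would corrupt the final formula.
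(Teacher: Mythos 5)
Your proposal is correct and follows essentially the same route as the paper: both parts reduce part (i) to Lemma~\ref{dec12-11} and part (ii) to the correspondence of Theorem~\ref{ci-i(x)-p-1}, the identification $P=P'$ with gradings rescaled by $r=\gcd(d_1,\ldots,d_n)$, the Frobenius formula of \cite[Remark~4.5]{stcib}, and the regularity formula of Lemma~\ref{hilbertseries-ci-coro}. The only cosmetic difference is that you pass from $I(X)$ to $P$ via Theorem~\ref{ci-i(x)-p-1}(a), whereas the paper starts from binomial generators of $P=P'$ and transfers them to $I(X)$; the bookkeeping $\deg(h_i)=r\,\delta_i'$ is identical.
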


\begin{proof} Part (i) follows at once from Lemma~\ref{dec12-11}. Next, 
we prove (ii). Let $P$ and $P'$ be as in the proof of
Proposition~\ref{ci-x-x'}. With the notation above, by Lemma~\ref{jan1-12}, we get
that $d_i=rd_i'$ for all $i$. The toric ideals $P$ and $P'$ are equal but they are
graded differently. Recall that $P$ and $P'$ are graded with respect to
the gradings 
induced by assigning $\deg(t_i)=d_i$ and $\deg(t_i)=d_i'$ 
for all $i$, respectively. Let $g_1,\ldots,g_{n-1}$ be a generating set of $P=P'$
consisting of binomials. Then, by Theorem~\ref{ci-i(x)-p-1}, $I(X)$ is
generated by $h_1,\ldots,h_{n-1}$, where $h_i$ is
$g_i(t_1^{d_1},\ldots,t_n^{d_n})$ for all $i$. Accordingly, $I(X')$ is
generated by $h_1',\ldots,h_{n-1}'$, where $h_i'$ is
$g_i(t_1^{d_1'},\ldots,t_n^{d_n'})$ for all $i$. If $D_i=\deg(h_i)$
and $D_i'=\deg(h_i')$, then $D_i=rD_i'$ for all $i$. As $P'$ is a
complete intersection generated by $g_1,\ldots,g_{n-1}$ and
$\deg_{P'}(g_i)=D_i'$ for all $i$, using 
\cite[Remark~4.5]{stcib}, we get
$$
g(\mathcal{S}')=\sum_{i=1}^{n-1}D_i'-\sum_{i=1}^{n}d_i'=
\sum_{i=1}^{n-1}(D_i/r)-\sum_{i=1}^{n}(d_i/r).
$$
Therefore, using the equality ${\rm reg}\, S/I(X)=\sum_{i=1}^{n-1}(D_i-1)$ (see
Lemma~\ref{hilbertseries-ci-coro}), the
formula for the regularity follows.
\end{proof}

\begin{example}\label{jun20-12} To illustrate how to use the
algorithm of \cite{stcib-algorithm} we consider
the degenerate torus $X$, over the field $\mathbb{F}_q$, parameterized by
$x_1^{v_1},\ldots,x_5^{v_5}$, 
where $v_1=1500$, $v_2=1000$, $v_3=432$, $v_4=360$, $v_5=240$, 
and $q=54001$. In this case, one has 
$$ 
d_1=36,\ d_2=54,\ d_3=125,\ d_4=150,\ d_5=225.
$$

Using \cite[Algorithm CI, p.~981]{stcib-algorithm}, we get that $P$ is
a complete intersection generated by the binomials
$$
g_1=t_1^3-t_2^2,\ g_2=t_4^3-t_5^2,\ g_3=t_3^3-t_4t_5,\
g_4=t_1^8t_2^3-t_4^3,
$$
and we also get that the Frobenius number of $\mathcal{S}$ is $793$.
Hence, by our results, the vanishing ideal $I(X)$ is a complete
intersection generated by the binomials
$$
h_1=t_1^{108}-t_2^{108},\ h_2=t_4^{450}-t_5^{450},\
h_3=t_3^{375}-t_4^{150}t_5^{225},\
h_4=t_1^{288}t_2^{162}-t_4^{450},
$$ 
the index of regularity and degree of $S/I(X)$ are $1379$ and
$8201250000$, respectively.
\end{example}

The next example is interesting because if one chooses $v_1,\ldots,v_n$
at random, it is likely that $I(X)$ will be generated by 
binomials of the form $t_i^m-t_j^m$.

\begin{example}\label{jan5-12} Let $\mathbb{F}_q$ be the field with
$q=211$ elements.  
Consider the sequence $v_1=42$, $v_2=35$, $v_3=30$. In this case, one
has $d_1=5$, $d_2=6$, $d_3=7$. By a well known result of Herzog \cite{He3}, 
one has 
$$
P=(t_2^{2}-t_1t_3,\, t_1^{4}-t_2t_3^{2},\,
t_1^{3}t_2-t_3^{3}).
$$

Hence, by our results, 
$I(X)=(t_2^{12}-t_1^5t_3^7,\, t_1^{20}-t_2^6t_3^{14},\,
t_1^{15}t_2^6-t_3^{21})$ and this ideal is not a complete
intersection. The index of regularity and the degree of $S/I(X)$ are
$25$ and $210$, respectively. The Frobenius number of $\mathcal{S}$ 
is equal to $9$. Notice that the toric
relations  $t_1^{30}-t_2^{30}$, $t_1^{35}-t_3^{35}$,
$t_2^{42}-t_3^{42}$ do not generate $I(X)$. 
\end{example}

The next example was found using Theorem~\ref{ci-i(x)-p-1}. Without 
using this theorem it is very difficult to construct
examples of complete intersection vanishing ideals not generated by
binomials of the form $t_i^{m}-t_j^m$. 

\begin{example}\label{jan5-12-1} Let $\mathbb{F}_q$ be the field with
$q=271$ elements. 
Consider the sequence $v_1=30$, $v_2=135$, $v_3=54$. In this case, one
has $d_1=9$, $d_2=2$, $d_3=5$. The ideals $P$ and $I(X)$ are complete
intersections given by
$$
P=(t_1-t_2^2t_3,\, t_2^{5}-t_3^{2})\ \mbox{ and }\ 
I(X)=(t_1^{9}-t_2^4t_3^5,\, t_2^{10}-t_3^{10}).
$$
By Lemma~\ref{hilbertseries-ci-coro}, the index of regularity of $S/I(X)$
is $17$ and by Corollary~\ref{ci-formula-gcd=1} the Frobenius number
of $\mathcal{S}$ is $3$.  
\end{example}

\subsection*{The computation of the vanishing ideal}

In this part we show how to compute the vanishing ideal using the
notion of saturation of an ideal with respect to a polynomial.

The next lemma is easy to show.

\begin{lemma}\label{jun19-12} If $c_{ij}:={\rm
lcm}\{d_i,d_j\}={\rm lcm}\{o(\beta^{v_i}),o(\beta^{v_j})\}$,  
then $t_i^{c_{ij}}-t_j^{c_{ij}}\in I(X)$.
\end{lemma}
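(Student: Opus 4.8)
The plan is to show directly that the binomial $t_i^{c_{ij}}-t_j^{c_{ij}}$ vanishes at every point of $X$, where $c_{ij}=\operatorname{lcm}\{d_i,d_j\}$ with $d_i=o(\beta^{v_i})$. Recall that a point of $X$ has the form $[(x_1^{v_1},\ldots,x_n^{v_n})]$ with each $x_k\in K^*$, and since $f=t_i^{c_{ij}}-t_j^{c_{ij}}$ is homogeneous, it is well defined to ask whether it vanishes on such a point; equivalently, it suffices to evaluate $f$ on the affine torus $X^*$ and check $f(x_1^{v_1},\ldots,x_n^{v_n})=0$. Plugging in gives $(x_i^{v_i})^{c_{ij}}-(x_j^{v_j})^{c_{ij}}$, so the whole claim reduces to the single statement that $(x_i^{v_i})^{c_{ij}}=(x_j^{v_j})^{c_{ij}}$ for all $x_i,x_j\in K^*$.

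First I would reduce to showing that each term is in fact equal to $1$. The key observation is that $d_i=o(\beta^{v_i})$ is exactly the order of the element $\beta^{v_i}$ in the cyclic group $K^*$, and more to the point it is the exponent of the cyclic subgroup $\langle\beta^{v_i}\rangle=S_i$ in the notation of Lemma~\ref{dec12-11}. Since every $x_k\in K^*$ can be written as a power of the generator $\beta$, the element $x_i^{v_i}$ lies in $\langle\beta^{v_i}\rangle=S_i$, a cyclic group of order $d_i$; hence $(x_i^{v_i})^{d_i}=1$. Because $d_i\mid c_{ij}$ by definition of the least common multiple, raising to the $c_{ij}$-th power also yields $(x_i^{v_i})^{c_{ij}}=1$. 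By the identical argument with the roles of $i$ and $j$ swapped, using $d_j\mid c_{ij}$, we get $(x_j^{v_j})^{c_{ij}}=1$ as well.

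Combining these, $(x_i^{v_i})^{c_{ij}}-(x_j^{v_j})^{c_{ij}}=1-1=0$, so $f$ vanishes at the arbitrary point $(x_1^{v_1},\ldots,x_n^{v_n})\in X^*$ and therefore at the corresponding point $[(x_1^{v_1},\ldots,x_n^{v_n})]\in X$. Since the point was arbitrary and $f$ is homogeneous, $f$ vanishes on all of $X$, whence $t_i^{c_{ij}}-t_j^{c_{ij}}\in I(X)$ by the very definition of the vanishing ideal. The second equality in the statement, $c_{ij}=\operatorname{lcm}\{o(\beta^{v_i}),o(\beta^{v_j})\}$, is not a separate claim to prove but merely records that $d_i=o(\beta^{v_i})$ and $d_j=o(\beta^{v_j})$, which is the running convention fixed in the introduction.

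There is essentially no obstacle here, which matches the paper's own remark that ``the next lemma is easy to show''; the only point requiring a moment's care is making sure the divisibility runs the correct way — we need $d_i\mid c_{ij}$ and $d_j\mid c_{ij}$, which is precisely the defining property of $\operatorname{lcm}$, rather than the reverse. One could alternatively phrase the proof lattice-theoretically via Proposition~\ref{jan2-12}: the vector $c_{ij}(e_i-e_j)$ lies in $\mathcal{L}=D(\ker\psi)$ since $c_{ij}(e_i-e_j)=D\bigl((c_{ij}/d_i)e_i-(c_{ij}/d_j)e_j\bigr)$ and $(c_{ij}/d_i)d_i-(c_{ij}/d_j)d_j=c_{ij}-c_{ij}=0$ shows the preimage lies in $\ker\psi$, so $t_i^{c_{ij}}-t_j^{c_{ij}}=t^{(c_{ij}(e_i-e_j))^+}-t^{(c_{ij}(e_i-e_j))^-}\in I(\mathcal{L})=I(X)$; but the direct evaluation argument is cleaner and self-contained.
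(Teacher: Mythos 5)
Your proof is correct: the paper itself omits the argument (stating only that ``the next lemma is easy to show''), and your direct evaluation---$x_i^{v_i}\in\langle\beta^{v_i}\rangle$ has order dividing $d_i$, hence $(x_i^{v_i})^{c_{ij}}=1=(x_j^{v_j})^{c_{ij}}$ since $d_i\mid c_{ij}$ and $d_j\mid c_{ij}$---is precisely the intended one, matching the verification style the paper uses inside the proof of Lemma~\ref{dec19-11} (``as $(\beta^{v_i})^{d_i}=1$ for all $i$, it is seen that $g$ vanishes at all points of $X$''). Your alternative lattice-theoretic derivation via Proposition~\ref{jan2-12} is also valid and is in fact the viewpoint the paper adopts in the proof of Proposition~\ref{computing-i(x)-saturation}, where the vectors $c_{ij}e_i-c_{ij}e_j$ are shown to generate $\mathcal{L}$.
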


The set of {\it toric relations\/}
$\mathcal{T}=\{t_i^{c_{ij}}-t_j^{c_{ij}}\colon 1\leq i,j\leq n\}$ do not generate
$I(X)$, as is seen in Example~\ref{jan5-12}. If $v_i=1$ for all $i$,
then $c_{ij}=q-1$ for all $i,j$ and $I(X)$ is generated by $\mathcal{T}$.  

For  an ideal $I\subset S$ and  a polynomial $h\in S$ the {\it
saturation\/} of $I$ with respect to $h$
is the ideal
$$(I\colon
h^{\infty}):=\{f\in S\, \vert\, fh^k\in I\mbox{ for some }k\geq 1\}.
$$

\begin{proposition}\label{computing-i(x)-saturation} 
Let $I'$ be the ideal $(t_i^{c_{ij}}-t_j^{c_{ij}}\vert\,
1<i<j\leq n)$, where  $c_{ij}={\rm lcm}\{d_i,d_j\}$. 
If $\gcd(d_1,\ldots,d_n)=1$, then $I(X)=(I'\colon
(t_1\cdots t_n)^\infty)$. 
\end{proposition}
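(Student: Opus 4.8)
The plan is to prove the two inclusions separately; the essential content is a lattice-theoretic fact, namely that the exponent vectors of the generators of $I'$ generate the whole lattice $\mathcal{L}$ defining $I(X)$, and this is exactly where $\gcd(d_1,\ldots,d_n)=1$ is used.

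First the easy inclusion. By Lemma~\ref{jun19-12} every generator $t_i^{c_{ij}}-t_j^{c_{ij}}$ of $I'$ lies in $I(X)$, so $I'\subseteq I(X)$. By Proposition~\ref{jan2-12} the ideal $I(X)=I(\mathcal{L})$ is a lattice ideal, so each $t_i$ is a non-zero-divisor on $S/I(X)$ and $I(X)$ is saturated with respect to $t_1\cdots t_n$. Hence
$$(I'\colon(t_1\cdots t_n)^\infty)\subseteq(I(X)\colon(t_1\cdots t_n)^\infty)=I(X).$$

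For the reverse inclusion I would pass to lattices. By Proposition~\ref{jan2-12}, $I(X)=I(\mathcal{L})$ with $\mathcal{L}=D(\ker\psi)=\{a\in\mathbb{Z}^n\colon d_i\mid a_i\text{ for all }i,\ |a|=0\}$. The generator $t_i^{c_{ij}}-t_j^{c_{ij}}$ has exponent vector $w_{ij}=c_{ij}(e_i-e_j)=D(u_{ij})$, where, writing $g_{ij}=\gcd(d_i,d_j)$, we have $u_{ij}=(d_j/g_{ij})e_i-(d_i/g_{ij})e_j\in\ker\psi$. Let $M=\langle w_{ij}\rangle\subseteq\mathcal{L}$. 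Because the terms of each generator of $I'$ have disjoint support, the standard description of lattice ideals in terms of a generating set of their lattice (cf.\ \cite{EisStu}) gives $(I'\colon(t_1\cdots t_n)^\infty)=I(M)$; since a lattice ideal determines its lattice, it then suffices to prove the key claim $M=\mathcal{L}$, equivalently (transporting through the isomorphism $D$) that the circuits $u_{ij}$ generate $\ker\psi$. Alternatively one can avoid quoting the saturation statement and instead, once $M=\mathcal{L}$ is known, derive $t^{a^+}-t^{a^-}\in(I'\colon(t_1\cdots t_n)^\infty)$ for each $a\in\mathcal{L}$ by the usual telescoping of the binomials $w_{ij}$ against a monomial.

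The claim $\langle u_{ij}\rangle=\ker\psi$ is the main obstacle. I would prove it by induction on $n$, observing first that both $\ker\psi$ and the vectors $u_{ij}$ are unchanged when all $d_i$ are divided by a common factor, so coprimality may be assumed at each stage. The base case $n=2$ is immediate, since $\gcd(d_1,d_2)=1$ forces $\ker\psi=\mathbb{Z}(d_2e_1-d_1e_2)=\mathbb{Z}u_{12}$. For the inductive step set $d'=\gcd(d_1,\ldots,d_{n-1})$, so that $\gcd(d',d_n)=1$. Given $m\in\ker\psi$, the relation $d_nm_n=-\sum_{i<n}d_im_i$ shows $d'\mid m_n$, and a prime-by-prime check yields the identity $\gcd_{i<n}(d_i/g_{in})=d'$; hence the $e_n$-coordinates of the vectors in $\langle u_{in}\colon i<n\rangle$ range over all of $d'\mathbb{Z}$. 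Subtracting a suitable combination of the $u_{in}$ from $m$ kills its $n$-th coordinate and lands in the kernel attached to $d_1,\ldots,d_{n-1}$, which the induction hypothesis (after dividing by $d'$, an operation leaving the $u_{ij}$ with $i,j<n$ unchanged) expresses through the remaining circuits. The delicate point, and where I expect to spend the most care, is precisely the divisibility $d'\mid m_n$ together with the gcd identity $\gcd_{i<n}(d_i/\gcd(d_i,d_n))=\gcd(d_1,\ldots,d_{n-1})$: these are what make the coordinate-killing step possible and encode the role of the overall coprimality hypothesis. Everything else is formal once $M=\mathcal{L}$ is established.
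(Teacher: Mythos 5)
Your proof is correct and follows the same skeleton as the paper's: the containment $(I'\colon(t_1\cdots t_n)^\infty)\subseteq I(X)$ via $I'\subseteq I(X)$ (Lemma~\ref{jun19-12}) and the fact that each $t_i$ is a non-zero divisor modulo the lattice ideal $I(X)$, and the reverse containment by transporting through $D$ and reducing everything to the single lattice-theoretic claim that the circuits $u_{ij}=(d_j/\gcd(d_i,d_j))\,e_i-(d_i/\gcd(d_i,d_j))\,e_j$ generate $\ker(\psi)$, combined with the standard description of a lattice ideal as the saturation of the binomial ideal attached to any generating set of the lattice. The only genuine difference is how that claim is handled: the paper quotes it outright as \cite[Proposition~10.1.8]{monalg}, and uses \cite[Lemma~2.3]{ci-lattice} where you appeal to \cite{EisStu}, whereas you prove it from scratch by induction on $n$. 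Your induction is sound: the reduction to coprime tuples at each stage is legitimate because $\ker(\psi)$ and the vectors $u_{ij}$ are unchanged when all $d_i$ are divided by a common factor (indeed $\gcd(d_i/r,d_j/r)=\gcd(d_i,d_j)/r$); the divisibility $d'\mid m_n$ uses exactly $\gcd(d',d_n)=1$; and the identity $\gcd_{i<n}\bigl(d_i/\gcd(d_i,d_n)\bigr)=d'$ does check prime by prime, the point being that a prime dividing $d_n$ cannot divide $d'$, so some $d_i$ with $i<n$ has valuation zero at that prime, while at primes not dividing $d_n$ the valuations of $d_i/\gcd(d_i,d_n)$ and $d_i$ agree. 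What your route buys is self-containedness and an explicit view of where coprimality enters; what the paper's buys is brevity. A side remark your own argument makes visible: since dividing out a common factor changes neither $\ker(\psi)$ nor the $u_{ij}$, the circuits generate $\ker(\psi)$ for arbitrary $d_1,\ldots,d_n$, hence $M=D(\ker\psi)=\mathcal{L}$ always, so the hypothesis $\gcd(d_1,\ldots,d_n)=1$ in the proposition is not actually needed.
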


\begin{proof} We claim that $\mathcal{L}=\langle c_{ij}e_i-c_{ij}e_j\vert\,
1\leq i<j\leq n\rangle$. By \cite[Proposition~10.1.8]{monalg}, we get  
$$\mathcal{L}_1=\langle (d_j/\gcd(d_i,d_j))e_i-(d_i/\gcd(d_i,d_j))e_j\vert\,
1\leq i<j\leq n\rangle.$$ 
Thus, the claim follows from the equality
$\mathcal{L}=D(\mathcal{L}_1)$. The inclusion ``$\supset$'' follows 
readily using that $t_i$ is a non-zero divisor of $S/I(X)$ for all $i$
because $I(X)$ is a lattice
ideal containing $I'$ (see Lemma~\ref{jun19-12}). To show
the inclusion ``$\subset$'', take a binomial $f=t^a-t^b\in I(X)$. By 
Proposition~\ref{jan2-12}, $I(X)=I(\mathcal{L})$. Thus, 
$a-b\in\mathcal{L}$. Using the previous claim and \cite[Lemma~2.3]{ci-lattice}, there is
$\delta\in\mathbb{N}^n$ such that $t^\delta f\in I'$. Hence, $f\in (I'\colon
(t_1\cdots t_n)^\infty)$.
\end{proof}

\bibliographystyle{plain}

\end{document}